\documentclass[12pt]{article}
\textwidth=6.5in
\oddsidemargin=-0.1in
\evensidemargin=-0.1in

\usepackage{bbm} 
\usepackage{amsmath,amssymb,amsthm,amsfonts}
\usepackage{latexsym, comment}
\usepackage{tikz, xcolor}

\numberwithin{equation}{section}

\begin{document}

\newtheorem{prop}{Proposition}[section]
\newtheorem{cor}{Corollary}[section] 
\newtheorem{theo}{Theorem}[section]
\newtheorem{lem}{Lemma}[section]
\newtheorem{rem}{Remark}[section]
\newtheorem{con}{Conjecture}[section]
\newtheorem{as}{Assumption}[section]
\newtheorem{de}{Definition}[section]
\newtheorem{notation}{Notations}[section]

\newtheorem*{theo*}{Theorem}

\newcommand\cA{\mathcal A}
\newcommand\cB{\mathcal B}
\newcommand\cC{\mathcal C}
\newcommand\cD{\mathcal D}
\newcommand\cE{\mathcal E}
\newcommand\cF{\mathcal F}
\newcommand\cG{\mathcal G}
\newcommand\cH{\mathcal H}
\newcommand\cI{\mathcal I}
\newcommand\cJ{\mathcal J}
\newcommand\cK{\mathcal K}
\newcommand\cL{{\mathcal L}}
\newcommand\cM{\mathcal M}
\newcommand\cN{\mathcal N}
\newcommand\cO{\mathcal O}
\newcommand\cP{\mathcal P}
\newcommand\cQ{\mathcal Q}
\newcommand\cR{{\mathcal R}}
\newcommand\cS{{\mathcal S}}
\newcommand\cT{{\mathcal T}}
\newcommand\cU{{\mathcal U}}
\newcommand\cV{\mathcal V}
\newcommand\cW{\mathcal W}
\newcommand\cX{{\mathcal X}}
\newcommand\cY{{\mathcal Y}}
\newcommand\cZ{{\mathcal Z}}

\newcommand\Z{\mathbb Z}
\newcommand\N{\mathbb N}
\newcommand\E{\operatorname{\mathbb E{}}}
\renewcommand\P{\operatorname{\mathbb P{}}}
\newcommand\Var{\operatorname{Var}}
\newcommand\Cov{\operatorname{Cov}}
\newcommand\Corr{\operatorname{Corr}}
\newcommand\Exp{\operatorname{Exp}}
\newcommand\Po{\operatorname{Po}}
\newcommand\Bi{\operatorname{Bi}}
\newcommand\Bin{\operatorname{Bin}}
\newcommand\Mul{\operatorname{Mul}}
\newcommand\Be{\operatorname{Be}}
\newcommand\Ge{\operatorname{Ge}}
\newcommand\NBi{\operatorname{NegBin}}
\newcommand\Res{\operatorname{Res}}
\newcommand\fall[1]{^{\underline{#1}}}
\newcommand\rise[1]{^{\overline{#1}}}
\newcommand\whp{w.h.p.}

\renewcommand{\=}{:=}

\newcommand\ga{\alpha}
\newcommand\gb{\beta}
\newcommand\gd{\delta}
\newcommand\gD{\Delta}
\newcommand\gf{\varphi}
\newcommand\gam{\gamma}
\newcommand\gG{\Gamma}
\newcommand\gk{\varkappa}
\newcommand\gl{\lambda}
\newcommand\gL{\Lambda}
\newcommand\go{\omega}
\newcommand\gO{\Omega}
\newcommand\gs{\sigma}
\newcommand\gss{\sigma^2}
\newcommand\gth{\theta}
\newcommand\gthx{\vartheta}
\newcommand\eps{\varepsilon}
\newcommand\ep{\varepsilon}

\newcommand\tc{t\cx}
\newcommand\cx{_\mathsf{c}}
\newcommand\tcx{t\cxx}
\newcommand\acx{a\cxx}
\newcommand\cxx{_\mathsf{c}^*}
\newcommand\tx{t_*}
\newcommand\ac{a\cx}
\newcommand\Ax{A^*}

\newcommand\ntoo{\ensuremath{{n\to\infty}}}
\newcommand{\tend}{\longrightarrow}
\newcommand\dto{\overset{\mathrm{d}}{\tend}}
\newcommand\pto{\overset{\mathrm{p}}{\tend}}
\newcommand\oi{[0,1]}
\newcommand\Op{O_{\mathrm p}}
\newcommand\op{o_{\mathrm p}}

\newcommand\cao{\cA(0)}
\newcommand\bc{b\cx}
\newcommand\bcx{b^*}
\newcommand\gnp{\ensuremath{G_{n,p}}}

\newcommand\cDA{\Delta\cA}

\newcommand\set[1]{\ensuremath{\{#1\}}}

\newcommand\setn{\set{1,\dots,n}}

\newcommand\ettfir{1-\gf(\ga)^{r-1}}

\newcommand\simin{\in}

\newcommand\AsN{\mathrm{AsN}}

\newcommand\tpi{{\tilde \pi}}

\newcommand\qq{^{1/2}}
\newcommand\qw{^{-1}}
\newcommand\qwr{^{-1/r}}
\newcommand\qww{^{-2}}

\newcommand\bigpar[1]{\bigl(#1\bigr)}
\newcommand\Bigpar[1]{\Bigl(#1\Bigr)}
\newcommand\etto{\bigpar{1+o(1)}}
\newcommand\ettop{\bigpar{1+o_p(1)}}

\newcommand\floor[1]{\lfloor#1\rfloor}

\newcommand\dd{\,\textup{d}}
\newcommand\ddq{\textup{d}}
\newcommand\ddt{\frac{\ddq}{\ddq t}}

\newcommand\lrpar[1]{\left(#1\right)}
\newcommand\parfrac[2]{\lrpar{\frac{#1}{#2}}}

\newcommand\punkt[1]{\if.#1\else.\spacefactor1000\fi{#1}}
\newcommand\ie{i.e\punkt}

\newcommand\PTRF{\emph{Probab. Theory Rel. Fields} }

\newcommand\ind{{\mathbbm 1}}

\newenvironment{romenumerate}[1][0pt]{
\addtolength{\leftmargini}{#1}\begin{enumerate}
 \renewcommand{\labelenumi}{\textup{(\roman{enumi})}}%
 \renewcommand{\theenumi}{\textup{(\roman{enumi})}}%
 }{\end{enumerate}}

\setcounter{page}{1}
\renewcommand{\theequation}{\thesection.\arabic{equation}}


\title{Bootstrap percolation \\
  on a graph with random and local
  connections. }

\author{
Tatyana S. Turova 
\thanks{ Mathematical Centre, University of
Lund, Box 118, Lund S-221 00, 
Sweden.}
\and Thomas Vallier
\thanks{
Department of Mathematics and Statistics, Box 68, FI-00014 University of Helsinki, Finland
 }}

\date{}
\maketitle

\begin{abstract}
Let  $G_{n,p}^1$ be a
superposition of the random graph $G_{n,p}$ and a one-dimensional
lattice: the $n$ vertices are set to be
on a ring with fixed  edges between the consecutive vertices, 
and with random independent edges given with probability $p$ between any pair of vertices.
Bootstrap percolation on a random graph 
is a process of spread of 
"activation'' 
on a given realization of the graph with a given number of
initially active nodes. At each step those vertices which have not been
active but have at least $r \geq 2$ active neighbours become active as well.  
We study the size of the final active set in the limit when $n\rightarrow
\infty $. The parameters of the model
are  $n$, the size $A_0=A_0(n)$ of the initially active set and the
probability $p=p(n)$ of the edges in the graph. 

Bootstrap percolation process on $G_{n,p}$ was studied earlier. 
Here we show that the addition of  $n$ local connections to the graph 
$G_{n,p}$ 
leads to a more narrow
critical window for the phase transition, preserving however, 
the critical scaling of parameters known for the model on $G_{n,p}$.
 We discover a range of  parameters which 
 yields percolation on $G_{n,p}^1$  but not on $G_{n,p}$.

\end{abstract}

{\it MSC2010 subject classifications.} 05C80, 60K35, 60C05.

{\it Key words and phrases.}  Bootstrap percolation, random graph, phase transition.

\section{Introduction}
Bootstrap percolation was introduced on a Bethe lattice by Chalupa, Leath and Reich \cite{CLR}
 to model some magnetic systems. Also, models of 
neuronal activity have very similar basic features. 
(Use of percolation models in neuronal sciences was predicted already by Harris \cite{Ha}.)  
Therefore we   define here a bootstrap percolation 
on a  graph $G$ 
 as the spread of
\emph{activation} 
in the following way.
Assume $G$ has a finite set of vertices, call it $V$. 
There is an initial set $\cA_0 \subset V$ of
\emph{active} vertices.
For a given threshold $r \ge 2$, each inactive vertex that has at least $r$ 
active neighbours (i.e., the vertices connected to it by the graph edges)
becomes active and can spread the activation along its edges. 
This is repeated until no more vertex becomes active.
Active vertices never become inactive, so the set of active vertices
grows monotonously. Let $\cA^*$ denote the final active set.
 We say that (a sequence of)
$\cA_0$ \emph{percolates} (completely) if $|\cA^*| = \Ax=n$ and that $\cA_0$  \emph{almost percolates} if the number of
vertices that remain inactive is $o(n)$, \ie, if $\Ax=n-o(n)$.

Bootstrap percolation  has been 
extensively studied on varieties of graphs, as e.g., 
 $d$-dimensional grid (see recent results by
Balogh, Bollob\'as, Duminil-Copin and Morris
\cite{BBDM} and Uzzell \cite{U}), hypercube (Balogh and  Bollob{\'a}s \cite{BB}),
infinite trees
(Balogh, Peres and Pete \cite{BPP}), random regular graphs (Balogh and Pittel \cite{BP},
Janson \cite{SJ215}), 
Erd\H os--R\'enyi random graph $\gnp$   (Janson, {\L}uczak, Turova and Vallier \cite{JLTV}),  Galton-Watson trees
(Bollob\'as,  Gunderson,  Holmgren,  Janson, and 
Przykucki \cite{BG}).

We study here a graph with both local and global links. 
This is a simplification of the  model  introduced in  \cite{TV} as a 
model of neuronal activity
(see e.g., also \cite{Tl}, \cite{T},  \cite{Ko}, \cite{KP} for the 
related studies). It is known that in a neuronal tissue the synaptic 
connections between neurons form a very complex network, 
where the strength of the connections may 
depend on the physical distances, as well as may be modelled  as "random",
associating the probability of a connection with its strength. Hence, considering two types of the connections is a step towards more complex model. 
(Notice a difference with "the small world network''
of Newman and Watts \cite{NW}:  we do not re-wind  edges as in 
\cite{NW}, but we consider a superposition of a lattice and a random graph on the same set of vertices.) 

The process of bootstrap percolation models the  propagation of impulses in the neuronal network: roughly speaking, in order to be activated a 
neuron should get a large enough number of incoming impulses. This is the main feature  of the bootstrap percolation: a vertex is activated if it is connected to a certain (but strictly greater than 1) number of active vertices.

Despite a long history of the subject and even very detailed results both for the $d$-dimensional grid and for  random graph (see the citations above), still
only a few theoretical results are available for  graphs with mixed connections. 
Recently a new percolation process, the so-called jigsaw percolation
was introduced and studied in \cite{BCDS}, and then developed and further investigated  in \cite{GS}. This process is indeed closely related to the one treated here, it
may also  evolve on a graph where the deterministic geometry of a lattice is combined with random independent connections between the 
vertices. Notice, that in the model of  jigsaw percolation the random edges (typically as in the Erd{\H o}s-R{\'e}nyi graph) represent some social links or ideas
 ("people graph''), while the other structure, as a lattice, for example, represents objects, or "puzzles". Therefore  edges from these two graphs 
in the definition of the jigsaw percolation
play distinct roles, unlike in our model (motivated by problems in neuroscience).  
In particular,  the mechanism of merging clusters in the models considered in 
\cite{BCDS} and \cite{GS} is different: roughly speaking, jigsaw percolation
is faster than bootstrap percolation.

We start with a regular one dimensional lattice. The vertices $V=\{1,...,n\}$ are ordered on a ring $R_n$ and have an edge with their two nearest neighbours. 
We add random connections. The random edges are given independently for each pair of
 vertices  with the same probability $p$. Hence, there might be at most two edges between the
vertices in the model, and if there are two edges between a pair of
vertices, the edges are necessarily of two types: one from the random
graph and another one from the lattice. In such a case, we merge the edges. The subgraph on $V$ with the random edges only is a random graph $G_{n,p}$. 
Similarly, replacing a ring by a $d$-dimensional torus with $n$ vertices
one can define a graph $G^d_{n,p}$ for all $d>1$.
One can also study bootstrap percolation on a 1-dimensional lattice where a vertex has a link with the vertices at distance at most $k$. 

We consider a bootstrap percolation on $G_{n,p}^1$ with the threshold
$r=2$ and $p=p(n)$.
We assume, that 
an initial set $\cA(0)$ consists of a given number $A_0 = A_0(n)\geq 2$ of vertices
chosen uniformly at random from the set $\{1, \ldots , n\}$.  
We study here the process with  the threshold $r=2$ for simplicity, but also for the fact that $2$ is a "critical'' value for the percolation on $\Z$ where each vertex has at most 2 connections. However, it should be possible to extend the results for the case with $r>2$ and possibly more local deterministic edges on  $\Z$.

Typically, a bootstrap percolation process exhibits a 
 threshold phenomenon: either
 $o(n)$ number of vertices become active,
or, on the contrary,   $n-o(n)$ vertices become active. 
The main question here is how the superposition of different structures affects the phase transition. In particular, is it possible to get a complete percolation combining two subcritical systems? 
In the case of an ordinary percolation model,  a superposition of
two subcritical graphs (one being a grid with randomly removed edges, bond percolation, and another one being an Erd\H os-R\'enyi random graph,
 each of which has the largest connected component of order at most $\log n$)  may have a component of order $n$  \cite{VT}. 
In this case superposition of the graphs produces new critical values in a phase diagram (see \cite{VT}).
We shall see here that  the bootstrap percolation process exhibits different properties (at least in dimension $1$).

\section{Results}

Let us recall some notations and results from \cite{JLTV} which we
need here. 
\subsection{Notations}
Let $1\leq i < j \leq n$, the distance between the vertices $i$ and $j$ is defined as
\begin{equation*}\label{eq:dist}
d(i,j) = \min \left\{ j-i , n+i-j \right\}.
\end{equation*}
The distance of a vertex $u$ to a set $ \cS$ is defined as
\begin{equation*}\label{eq:distset}
d(v,\cS) = \inf \left\{ d(u,v), v \in \cS \right\}.
\end{equation*}
We denote $\partial_1 \cS$ the outer boundary of a vertex set $\cS$ on $R_n$:
\begin{equation*}\label{eq:outer}
\partial_1 \cS = \left\{ v \in R_n\setminus \cS , d(v,\cS) =1 \right\} .
\end{equation*} 

We use the notations $O_{L^k}$ and $o_{L^k}$, as well as $O_{P}$ and  $o_P$, for the random variables in the same setting as in \cite{SJN6}. For example, 
let $a_n$ be some sequence of real numbers, then 
$X_n = O_{L^k} (a_n) \Leftrightarrow \E \left(|X_n|^k\right) = O\left((a_n)^k\right)$.
In particular, $X_n = O_{L^2} (a_n) \Rightarrow X_n = O_{L^1} (a_n) \Rightarrow X_n = O_{P} (a_n)$.

We use the notation $f(n)= \Theta\left(g(n)\right)$ as $c_1 g(n) \leq f(n) \leq c_2 g(n)$ for $c_1, c_2 >0$ and as $n \to \infty$.
We write that an event holds with high
probability (\whp{}) if the probability of this event tends to 1 as \ntoo.
Note that, for example,  `$=o(1)$ \whp{}' is equivalent to `$=o_P(1)$'
and to `$\pto0$'.

All unspecified limits are as \ntoo.

For given $n$ and $p$ define
\begin{equation*}\label{eq:ac}
 \ac\=\frac{1}{2} \tc = \frac{1}{2} \frac{1}{np^2}.
\end{equation*}
The term $\ac$ is the first-order term of the critical threshold 
$\acx (n,p)$ for bootstrap percolation on the random graph $\gnp$. The term $\acx (n,p)$ is defined in \cite{JLTV} as follows. Let
\begin{equation*}  \label{tpi}
\tpi(t)\=\P\bigpar{\Po(tp)\ge 2}
=\sum_{j=2}^\infty\frac{(pt)^j}{j!}e^{-pt},
\end{equation*}
where $\Po(tp)$ denotes a Poisson random variable with mean $tp$. Then set 
\begin{equation*}\label{eq:acx}
\acx\=-\min_{t\le3\tc}\frac{n\tpi(t)-t}{1-\tpi(t)},
\end{equation*}
and let $\tcx\in[0,3\tc]$ be the point where the minimum is attained.
Notice that $\tc = \frac{1}{np^2}$ is also the first-order term of $\tcx$.

\subsection{Results}

Let here
${\cal A}^*_0$ denote the final set of 
vertices activated due to a bootstrap percolation on a
random graph $G_{n,p}$ starting with $A_0$ active
vertices, i.e., we do not take into
account the local edges from the ring $R_n$.
It is clear that there is a coupling of these two models
(with and without the short edges) such that
\begin{equation}\label{co}
{\cal A}^*_0 \subseteq {\cal A}^*.
\end{equation}

The following theorem
(which is a  particular case when $r=2$ of the theorem proved in
\cite{JLTV} for a  general case $r\geq 2$.)
describes the phase transitions in the value $|{\cal A}^*_0|$ depending on the
initial condition $A_0$. 
Let $b_c =  pn^2 e^{-pn}$, which  is the expected number of vertices of degree 1 in $\gnp$.

\begin{theo*}[Theorem 3.1 \cite{JLTV}, case $r=2$] 
Suppose that  $n\qw\ll p\ll n^{-1/2}$.
Let $A^*_0=|{\cal A}^*_0|$ be the total number of
vertices activated due to a bootstrap percolation on a
random graph $G_{n,p}$ starting with $A_0$ active vertices. 
  \begin{romenumerate}
\item  \label{T3.1i}
  If\/ $A_0/\ac\to \ga<1$, then
  $$\Ax_0=(\gf(\ga)+o_P(1))\tc,$$ where
$\gf(\ga)=1-\sqrt{1-\ga}$ with $\lim_{\ga \to 1} \gf (\ga) = \gf (1) =1$. 
\item \label{T3.1ii}
  If\/ $A_0/\ac\ge 1+\gd$, for some $\gd>0$, then
  $\Ax_0=n-O_P(b_c)=n-o_P(n)$;
in other words, we have \whp{} almost percolation. 
  \end{romenumerate}
\end{theo*}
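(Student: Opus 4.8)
The plan is to follow the activation as a vertex-by-vertex exploration. I would fix an order on the active vertices and reveal, for each one in turn, all of its random edges; write $A_m$ for the number of active vertices after $m$ of them have been ``used''. A vertex not in $\cA_0$ becomes active precisely when it has accumulated $r=2$ revealed edges to the used set, so the exploration continues while some active vertex is still unused and halts at the first time $m^*$ with $A_{m^*}=m^*$, at which point $\Ax_0=m^*$. Everything then reduces to locating $m^*$, equivalently the first zero of $Y_m\=A_m-m$, the number of active-but-unused vertices, which starts at $Y_0=A_0>0$.

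The first step is concentration. A non-initial vertex is active at stage $m$ iff it has at least $r=2$ of its $\Bin(m,p)$-many edges in the used set, and since $\Bin(m,p)\approx\Po(mp)$ one gets $\E A_m = A_0 + n\tpi(m)+o(\tc)$, the error absorbing both the $\Po$-versus-$\Bin$ discrepancy and the $A_0\tpi(m)$ correction across the window. Revealing edges in exploration order presents $A_m-\E A_m$ as a sum of martingale increments, and a direct $L^2$ (second-moment) estimate gives $A_m=A_0+n\tpi(m)+\op(\tc)$ uniformly over the relevant range $m=O(\tc)$; in fact the fluctuations are of order $\sqrt{n\tpi(m)}=O_P(\sqrt{\tc})=\op(\tc)$. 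The decisive simplification is that on this window $mp=O\bigpar{1/(np)}\to0$, so $\tpi(m)=\tfrac12(mp)^2\etto$ and hence $n\tpi(m)=\tfrac12 np^2 m^2\etto$.

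Inserting this reduces the problem to the deterministic profile $g(m)\=A_0+\tfrac12 np^2 m^2-m$. Rescaling by $u=m/\tc=np^2m$ turns $g(m)=0$ into $u^2-2u+A_0/\ac=0$, that is $u^2-2u+\ga=0$, with roots $u=1\pm\sqrt{1-\ga}$; note also that $g$ attains its minimum near $m=\tc$ with value $\approx A_0-\ac$. In the subcritical case $\ga<1$ the profile has a first positive zero at $u=1-\sqrt{1-\ga}=\gf(\ga)$ and is strictly negative just beyond it, so the exploration stops there; since $g$ has slope of order $\sqrt{1-\ga}=\Theta(1)$ at this crossing, the $\op(\tc)$ fluctuation displaces the hitting point by only $\op(\tc)$, giving $m^*=(\gf(\ga)+\op(1))\tc$ and hence part~\ref{T3.1i}. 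In the supercritical case $\ga\ge1+\gd$ the discriminant is negative and $\min_m g\approx A_0-\ac\ge\gd\ac\to\infty$, so $Y_m$ stays bounded away from $0$ across the whole window and the exploration cannot terminate there; as $g'(m)=np^2m-1>0$ for $m>\tc$, the activation then accelerates until all but $o(n)$ vertices are active.

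The main obstacle is the endgame of part~\ref{T3.1ii}. Once nearly all vertices are active the Poisson approximation $n\tpi(m)$ is no longer valid, and one must argue directly that the vertices which can survive inactive are essentially those having fewer than $r=2$ edges to the final active set; the expected size of this set is governed by the number $\bc=pn^2e^{-pn}$ of degree-one vertices of $\gnp$. Upgrading the conclusion from ``$o_P(n)$ inactive'' to the sharp $n-\Ax_0=O_P(\bc)$ is the delicate point: it needs a separate second-moment estimate on low-degree vertices together with a uniform bound guaranteeing that, once supercritical, the exploration sweeps through every vertex of degree at least two. A secondary difficulty is that near $\ga=1$ the slope of $g$ at its first zero vanishes and $\gf$ ceases to be Lipschitz, so $m^*$ becomes sensitive to fluctuations; this is exactly why the statement leaves a gap between the regimes $\ga<1$ and $\ga\ge1+\gd$. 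Throughout, the hypothesis $p\ll n^{-1/2}$ is used to force $\tc=1/(np^2)\to\infty$, which is what makes the window macroscopic and legitimises both the concentration and the continuous approximation.
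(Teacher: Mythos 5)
Your proposal follows essentially the same route as the proof in \cite{JLTV} on which the paper relies: the vertex-by-vertex exploration with stopping rule $T=\min\{t: |\cA(t)|=t\}$, the binomial representation $|\cA(t)|=A_0+\Bin(n-A_0,\pi(t))$ with $\pi(t)=\P(\Bin(t,p)\ge 2)\approx \tfrac12(tp)^2$, martingale ($L^2$) concentration giving $O_P(\sqrt{\tc})$ fluctuations, and the analysis of the deterministic profile $A_0+\tfrac12 np^2t^2-t$ whose first zero at $\gf(\ga)\tc$ and whose minimum $\approx A_0-\ac$ yield parts (i) and (ii) respectively. Your identification of the supercritical endgame (low-degree vertices governing the $n-O_P(\bc)$ bound, with $\bc=pn^2e^{-pn}$) as the delicate separate step also matches how \cite{JLTV} completes the argument, so the sketch is faithful to the original proof.
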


Due to the observation (\ref{co}), if the initial set $\cA_0$ percolates
 on $G_{n,p}$,
then the same set  percolates on  $G_{n,p}^1$ 
which contains
$G_{n,p}$.
Therefore, we
 are interested here in the initial conditions  $\cA_0$ 
which do
not yield a  percolation on $G_{n,p}$, i.e., when $\alpha \leq  1$ in the
above theorem. The following theorem tells us that 
adding 
to  graph $G_{n,p}$ the edges between the nearest neighbours (in dimension one)
does not change much the subcritical regime,
at least when 
 $p\geq \frac{\log n}{n}$. 

\begin{theo}\label{T1}
Suppose that  $n^{-1} \ll  p\ll n^{-1/2}$.
Let $A^*=|{\cal A}^*|$ be the total number of
vertices activated due to a bootstrap percolation on a
random graph $G_{n,p}^1$ starting with $A_0$ active vertices. which are chosen uniformly at random from the vertex set $\{1,\ldots, n\}$.
  \begin{romenumerate}
\item\label{T1i}
  If 
$p\gg  \frac{\log n}{n \log (pn)}$ 
and 
$A_0/\ac\to \ga<1$, then
  $$\Ax=(\gf(\ga)+\op(1))\tc.$$ 
\item \label{T1ii}
  If \/ $A_0/\ac\ge 1+\gd$, for some $\gd>0$, then \whp{} 
  $\Ax=n-o(n)$;

if also $p\gg \frac{1}{2}\frac{\log n+\log(pn)}{n} $
then w.h.p. $\Ax=n$, i.e.,  we have w.h.p. complete percolation.
\end{romenumerate}
\end{theo}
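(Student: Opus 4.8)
The plan is to reduce both parts to the bare $G_{n,p}$ process, for which the quoted theorem of \cite{JLTV} is available, and then to control separately the extra activation produced by the ring edges. By the coupling \eqref{co} we have $\cA^*_0\subseteq\cA^*$, so in part \ref{T1i} the lower bound $\Ax\ge\Ax_0=(\gf(\ga)+\op(1))\tc$, and in part \ref{T1ii} the bound $\Ax\ge\Ax_0=n-\Op(\bc)=n-\op(n)$ (hence the first assertion $\Ax=n-o(n)$ \whp{}, using $\bc=pn^2e^{-pn}=o(n)$), are immediate. Thus the real content is (a) the matching upper bound $\Ax\le(\gf(\ga)+\op(1))\tc$ in the subcritical case, and (b) upgrading almost-percolation to \emph{complete} percolation under the stronger lower bound on $p$.

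For part \ref{T1i} I would run a single exploration of the combined process in the style of \cite{JLTV}, revealing the random edges of one active vertex at a time but now also exposing its two ring-neighbours. The new phenomenon is that a vertex lying on $R_n$ next to an already active vertex needs only \emph{one} active random neighbour (rather than two) to become active, and iterating this along the ring produces \emph{activation chains}. The key estimates are that throughout the subcritical phase the active set $\cA$ has $|\cA|=\Theta(\tc)$, so a fixed vertex has an active random neighbour with probability $q\approx|\cA|p=\Theta\bigpar{1/(pn)}$, and that a maximal chain therefore has geometric length with ratio $q$. Summing these lengths over the $O(\tc)$ active vertices bounds the \emph{expected} extra activation by $O(\tc q)=O\bigpar{\tc/(pn)}=\op(\tc)$. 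To upgrade this to a high-probability statement one must also rule out a rare long chain: with $O(n)$ possible starting points and $\P(\text{chain}\ge\ell)\approx q^{\ell}$, the longest chain has length $O\bigl(\log n/\log(pn)\bigr)$, and the hypothesis $p\gg\frac{\log n}{n\log(pn)}$ is precisely what guarantees $\log n/\log(pn)=o(pn)$, so that no chain, even after the subcritical random cascade it seeds, injects a macroscopic amount of activation. Fed back into the exploration, this shows the ring edges shift the effective initial condition by only $\op(\ac)$, leaving $\ga$ and $\gf(\ga)$ unchanged.

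For the complete-percolation statement in \ref{T1ii} I would start from the state after the $G_{n,p}$ process, whose set of inactive vertices has size $\Op(\bc)$, and analyse the terminal inactive set of the combined process. First, no isolated inactive vertex can survive: if $v$ is inactive but both ring-neighbours $v\pm1$ are active, then $v$ has two active neighbours and is activated. Hence every surviving inactive run on $R_n$ has length at least $2$, and at each endpoint the active ring-neighbour already supplies one active neighbour, so the endpoint must have \emph{no} active random neighbour to remain inactive. Since the final active set has size $n-o(n)$, an endpoint has no active random neighbour with probability at most $(1-p)^{n-o(n)}\le e^{-(1+o(1))pn}$, and a first-moment computation over the $O(n)$ candidate runs---dominated by the runs of length $2$---bounds the expected number of surviving runs by $n\,e^{-2pn}$ up to lower-order factors. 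The hypothesis $p\gg\frac12\frac{\log n+\log(pn)}{n}$ forces this to tend to $0$, so \whp{} no inactive run survives and $\Ax=n$.

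The coupling and the first-moment bound are routine; two points are delicate. In \ref{T1ii} one must justify the near-independence used in the first moment, since the active set appearing in ``no active random neighbour'' is itself produced by the process and correlated with the endpoint's edges; I would handle this by conditioning on the high-probability event $\set{\Ax\ge n-2\bc}$ and revealing the endpoints' random edges last. The genuine obstacle, however, is the upper bound in \ref{T1i}: showing that the activation chains do not interact to form a self-sustaining structure and that their cumulative effect, after amplification through the random cascade, is $\op(\tc)$ with high probability rather than merely in expectation. Making this quantitative---tracking the active count jointly with the number of vertices having exactly one active neighbour, and treating the ring contribution as a perturbation whose size is governed by $\log n/\log(pn)$---is where essentially all the work lies, and is exactly where the hypothesis $p\gg\frac{\log n}{n\log(pn)}$ is consumed.
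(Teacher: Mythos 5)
Your lower bounds via the coupling \eqref{co} and your treatment of part \ref{T1ii} are sound and essentially match the paper. For complete percolation the paper argues that, given $K=n-O(pn^2e^{-pn})$ active vertices, the expected number of inactive vertices with no random link to the active set is at most $(n-K)(1-p)^K=O(e^{-2pn+\log n+\log(pn)})=o(1)$, so \whp{} every surviving inactive run has endpoints with a random edge to the active set and is then eaten from both ends by mixed and short activations; your run-based first moment with the $e^{-2pn}$ factor per run consumes the hypothesis $2pn\gg\log n+\log(pn)$ in the same way, and your instinct about the measurability issue is justified (in the paper's exploration formalism the endpoints of surviving runs are precisely the vertices carrying no marks, which is what licenses the computation).

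The genuine gap is in part \ref{T1i}, and it is exactly the piece you flag but do not supply: the mechanism that converts the one-round estimate into a high-probability bound over an \emph{unbounded number} of feedback rounds. Your chain picture correctly identifies the geometric ratio $\Theta\bigl(1/(np)\bigr)$ and the quantity $\log n/\log(pn)$, but the inference ``longest chain $=O(\log n/\log(pn))=o(pn)$, hence no macroscopic injection'' is a non sequitur: bounding the length of a single ring chain does not bound the cumulative activation after each chain's output is amplified by a fresh subcritical $\gnp$ cascade, whose output seeds new chains, and so on. The paper resolves this by alternating exploration and expansion phases and proving a closed recursion: $|\cD_k|\le 4pT_k\sum_{l\le k}T_l$ up to error terms (Lemmas \ref{lem1}, \ref{CJ2}), hence $T_{k+1}\le\frac{10}{1-\beta}\,pT_k\sum_{l\le k}T_l$ with failure probability only $O\bigl((np)^{-1}+(pT_k\sum_l T_l)^{-1}\bigr)$ (Lemma \ref{LF1}, via the martingale bounds of Proposition \ref{P1}); a deterministic comparison (Lemma \ref{LF2}) then gives $T_k\le(c_1/pn)^{k-1}T_1$. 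Crucially, because the per-phase error probabilities are only Chebyshev-sized and \emph{grow} as $pT_k\sum_l T_l$ decays, one cannot simply union-bound over all phases: the paper introduces a cutoff $h\to\infty$ with $h=o(np)$, stops the fine analysis at the random time $\tau$ when $pT_k\sum_l T_l<h$, shows the number of phases needed is $K_0\approx\log n/\log(pn)$, and proves (Proposition \ref{Pfi} and its Claim) that a choice of $h$ with $K_0=o(h)$ exists \emph{if and only if} $\log n/\log(pn)=o(np)$ --- this is where the hypothesis $p\gg\frac{\log n}{n\log(pn)}$ is actually consumed, to make the accumulated failure probability $K_0\,O(h^{-1})=o(1)$ while the post-$\tau$ contribution is $O(h)=o(\tc)$ (handled by Lemma \ref{LF1}(i) and Lemma \ref{CJ2}, which show the process \whp{} dies after one more phase). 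Without this stopping-time bookkeeping --- or an equivalent device --- your sketch of part \ref{T1i} does not close, even though every heuristic quantity in it is the right one.
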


\begin{rem}\label{RM1}
The condition $p\gg  \frac{\log n}{n \log (pn)}$  in  Theorem \ref{T1} is satisfied, e.g., if $p\geq \frac{\log n}{n}$.
\end{rem}

Theorem \ref{T1}  does not describe the case when $p$ is close to 
$\frac{1}{n}$. Notice that 
 for  $p$ of order  $1/n$, 
addition of $n$ edges changes the graph properties.
What follows from our proof is that the subcritical phase 
for very small $p$ may have a large  
number of steps   before 
the process stops. 

It turns out that it is the critical case, i.e.,  when
$\alpha =1$, which is affected most by the presence 
of the local connections. First we recall the situation with $G_{n,p}$.

\begin{theo*}[Theorem 3.6  \cite{JLTV}]\label{Tac}
Suppose that $n\qw\ll p\ll n^{-1/2}$. Let $A^*_0$ be the total number of
vertices activated due to a bootstrap percolation (with threshold
$r=2$) on a random graph $G_{n,p}$ starting with $A_0$ active vertices.
  \begin{romenumerate}
\item
  If\/ $(A_0-\acx)/\sqrt{\ac}\to -\infty$, then
for every $\eps>0$, \whp{}  $A^*_0\le \tcx\le\tc(1+\eps)$.
If further $A_0/\acx\to1$, then $A^*_0=(1+\op(1))\tc$.
\item
  If\/ $(A_0-\acx)/\sqrt{\ac}\to +\infty$, then
$A^*_0=n-\Op(b_c)$.
\item
  If\/ $(A_0-\acx)/\sqrt{\ac}\to y\in(-\infty,\infty)$, then
for every $\eps>0$ and every $\bcx\gg\bc$ with $\bcx=o(n)$,
\begin{align*}
  \P(A^*_0>n-\bcx) &\to \Phi\bigpar{y},
\\
  \P\bigpar{A^*_0\in[(1-\eps)\tc,(1+\eps)\tc]} &\to 1-\Phi\bigpar{ y}.
\end{align*}
  \end{romenumerate}
\end{theo*}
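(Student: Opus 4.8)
The plan is to reduce the bootstrap dynamics on $G_{n,p}$ to a one--dimensional exploration process and then to analyse its fluid limit together with its Gaussian fluctuations in the critical window. I would reveal the active set one vertex at a time: process the active vertices in an arbitrary order, and when the $t$-th active vertex is explored, expose its random edges to the currently inactive vertices; any inactive vertex that thereby acquires its second active neighbour is added to the active set. Let $A(t)$ be the number of active vertices once $t$ of them have been explored and put $S(t)\=A(t)-t$ for the number of active--but--unexplored vertices, so that $S(0)=A_0$ and the process halts at $T\=\min\{t:S(t)=0\}$. At that instant the queue of unexplored active vertices is empty, so every active vertex has been explored and hence $A^*_0=A(T)=T$. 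Conditionally on the past, an inactive vertex has so far seen $\Bin(t,p)\approx\Po(tp)$ active neighbours, so it is active with probability $\approx\tpi(t)$; since $A_0=o(n)$ this gives $\approx n\tpi(t)$ activations out of the pool, and crucially the corresponding indicators over distinct pool vertices are independent (they involve disjoint edges).

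This produces the deterministic approximation $S(t)\approx g(t)$, where
\[
  g(t)\=A_0+n\tpi(t)-t,
\]
with $g(0)=A_0>0$. I would make this rigorous through a Doob decomposition $A(t)=\text{(compensator)}+M(t)$ and Freedman--type bounds showing that $S$ tracks $g$ with fluctuations of order $\sqrt t$, so that the process is stuck near the first zero of $g$. The transition between getting stuck and escaping is the tangency condition $g(\tcx)=g'(\tcx)=0$, equivalently $n\tpi'(\tcx)=1$ and $A_0=\tcx-n\tpi(\tcx)=\max_t\bigpar{t-n\tpi(t)}$ over the first hump of $t\mapsto t-n\tpi(t)$; to leading order this is $(1+o(1))\ac$ attained at $\tcx=(1+o(1))\tc$, and a more careful treatment of the compensator sharpens it to the exact value $\acx=-\min_t\frac{n\tpi(t)-t}{1-\tpi(t)}$.

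The critical window is dictated by the interplay of $g$ and $M$ near $\tcx$. There $g$ has a nondegenerate quadratic minimum $g(t)\approx(A_0-\acx)+\tfrac12 g''(\tcx)(t-\tcx)^2$ with $g''(\tcx)=n\tpi''(\tcx)\asymp 1/\tc>0$, so $g$ is essentially flat on a window of width $\asymp\ac^{3/4}$ around $\tcx$; over that same window the martingale changes by only $o(\sqrt{\ac})$, while its accumulated variance at $\tcx$ is $(1+o(1))\ac$ (matching the variance $n\tpi(\tcx)(1-\tpi(\tcx))$ of the independent activation indicators). A martingale central limit theorem then gives $\min_t S(t)=(A_0-\acx)+N$ with $N$ asymptotically $\mathcal N(0,\ac)$, and survival (non-extinction) of the exploration is, up to negligible error, exactly the event $\{(A_0-\acx)+N>0\}$.

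The three regimes follow. If $(A_0-\acx)/\sqrt{\ac}\to-\infty$ then $\min_t S<0$ whp, the exploration is stuck at $t_1<\tcx$, and $A^*_0=t_1\le\tcx\le\tc(1+\eps)$; if moreover $A_0/\acx\to1$ then $t_1\to\tcx$ and $A^*_0=(1+\op(1))\tc$. If $(A_0-\acx)/\sqrt{\ac}\to+\infty$ then the process escapes whp and runs until the only inactive vertices left are those that can never gather two active neighbours, namely the vertices of degree $\le 1$; their number is dominated by the degree-$1$ vertices, of which there are $\Theta(\bc)$ with $\bc=pn^2e^{-pn}$, giving $A^*_0=n-\Op(\bc)$. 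Finally, if $(A_0-\acx)/\sqrt{\ac}\to y$ then $\P(\text{escape})\to\P(N/\sqrt{\ac}>-y)=\Phi(y)$, which yields both displayed limits (escape corresponds to $A^*_0>n-\bcx$ and extinction to $A^*_0\in[(1-\eps)\tc,(1+\eps)\tc]$). The main obstacle is precisely this critical fluctuation analysis: at $\tcx$ the relation $A_0+n\tpi(t)=t$ that determines the crossing has unit derivative ($n\tpi'(\tcx)=1$), so the location of the crossing is degenerate and cannot be obtained by naive linearisation; one must instead control the martingale $M$ uniformly over the critical window, pin down the variance constant so that the normalisation by $\sqrt{\ac}$ produces a standard normal, and rule out that fluctuations before the window cause premature extinction. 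The replacement of the pool size and the sharpening of $\ac$ to $\acx$ (the source of the $1-\tpi$ denominator) is a more routine second-order refinement.
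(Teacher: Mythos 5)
This statement is Theorem 3.6 of [JLTV], which the paper quotes without proof; your proposal reconstructs exactly the exploration-process argument used there and recapped in Section 4 of this paper (reveal active vertices one at a time, the binomial representation of $|\cS_1(t)|$ with $\pi_1(t)=\P\{\Bin(t,p)\ge 2\}$, Doob/martingale control as in Proposition \ref{P1} and Corollary \ref{CF}, and Gaussian fluctuations of $A(t)-t$ at its quadratic minimum near $\tcx$, with the $1-\tpi$ denominator in $\acx$ coming from the pool size $n-A_0$). The sketch is correct and is essentially the same approach as the original proof.
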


In the following, 
we show that when 
$p$ is small enough, including short edges into the model may lead to percolation even when there is no percolation in $G_{n,p}$
with the same parameters. 

\begin{theo}\label{T2}
 Let $A^*$ be the total number of
vertices activated due to a bootstrap percolation on a random graph $G_{n,p}^1$ starting with $A_0$ active vertices, chosen uniformly out of the vertex set $\{1, \ldots, n\}.$ Assume, $A_0/\acx\to1$.
  \begin{romenumerate}
\item \label{T2i}
  If 
$n^{-1}\ll p \ll n^{-3/4}$ and either $A_0>\acx$,  or, in the case $A_0\leq \acx$,
\begin{equation}\label{sp}
 \frac{\acx -A_0}{\sqrt{\ac} } =   o\left(  \frac{1}{pn^{3/4}}\right)^2,
\end{equation}
then \whp{}  $A^* = n -o(n)$.
\item \label{T2ii}
 If $n^{-2/3}\ll p \ll n^{-1/2}$ and 
 $$
 \frac{A_0- \acx}{\sqrt{\ac} } 
\to -\infty,$$ then
for every $\eps>0$, \whp{}  $A^* \le \tcx\le\tc(1+\eps)$.
  \end{romenumerate}
\end{theo}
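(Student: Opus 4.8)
\emph{Proof plan.} The structural fact I will lean on throughout is that bootstrap percolation is monotone and order-independent: $\cA^*$ is the smallest superset of $\cA_0$ closed under the activation rule, so activations may be revealed in any order. The plan is therefore to run the process first using \emph{only} the $\gnp$-edges, producing an intermediate set $\cA_0^*$, and then to switch on the ring edges of $R_n$ and continue. If $A_0>\acx$ to the extent that the pure random phase already percolates, we are done by the coupling \eqref{co}. Otherwise, under the standing hypothesis $A_0/\acx\to1$, the critical-regime Theorem~3.6 of \cite{JLTV} leaves us with $|\cA_0^*|=(1+\op(1))\tc$ active vertices. Since $\tc=1/(np^2)\ll n$ (because $p\gg n\qw$), these sit on $R_n$ essentially as a uniformly random subset, and the entire question reduces to how many \emph{additional} activations the ring edges extract from this set.

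The crux is to estimate that ring contribution. The dominant mechanism is a vertex $v$ with exactly one ring-neighbour active and at least one random neighbour in the current active set $\cA$: such $v$ reaches the threshold $r=2$ through a single $\gnp$-edge, so it is genuinely new (it is not in $\cA_0^*$). With $a=|\cA|\approx\tc$ the expected number of such $v$ is
\[
 \Theta\Bigpar{n\cdot\tfrac{a}{n}\cdot ap}=\Theta\bigpar{a^2p}=\Theta\Bigpar{\tfrac{1}{n^2p^3}},
\]
which dominates the ``two active ring-neighbours'' term $\Theta(a^2/n)=\Theta(1/(n^3p^4))$ exactly because $p\gg n\qw$. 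I would establish a matching lower bound by a second-moment computation for the number of such frontier activations along $\partial_1\cA$ and verify concentration. Each new vertex abuts an existing active interval, so it both lengthens that interval — letting the next ring vertex again cross threshold through a single random edge — and injects a fresh $\gnp$-seed; iterating, the ring effectively raises the initial count by an amount of order $\rho\=1/(n^2p^3)$.

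It remains to show that an effective excess of order $\rho$ carries the process over the critical barrier. Near $t=\tcx$ the random drift $A_0+n\tpi(t)-t$ attains its minimum $\approx A_0-\acx$, and the $\sqrt{\ac}$-width of the window in Theorem~3.6 of \cite{JLTV} measures the diffusive fluctuations there; a deterministic boost beats them precisely when $\rho\gg\sqrt{\ac}$, i.e. $1/(n^2p^3)\gg 1/(p\sqrt n)$, which is $p\ll n^{-3/4}$ — exactly the range of part~(\ref{T2i}). Since $\sqrt{\ac}\,(1/(pn^{3/4}))^2=\Theta(\rho)$, hypothesis \eqref{sp} says $\acx-A_0=o(\rho)$, so the effective initial size exceeds $\acx$ by $\gg\sqrt{\ac}$; feeding $\cA_0^*$ together with the $\Theta(\rho)$ ring-generated seeds into the supercritical half of Theorem~3.6 then triggers the full cascade, giving $\Ax=n-o(n)$ \whp. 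For part~(\ref{T2ii}) I run the comparison the other way: for $p\gg n^{-2/3}$ I bound the \emph{total} ring contribution over the whole trajectory by $\Op(1/(n^2p^3))=\op(\sqrt{\ac})$, the stronger lower bound on $p$ buying the margin needed to make this hold \whp{} after the second-moment and union-bound errors, whence the process is dominated by a pure-$\gnp$ process whose effective initial set still lies $\gg\sqrt{\ac}$ below $\acx$, and the subcritical half of Theorem~3.6 yields $\Ax\le\tcx\le\tc(1+\eps)$ \whp.

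The main obstacle is the second paragraph: decoupling the ring growth from the near-critical $\gnp$-amplification, since every ring-assisted activation immediately re-enters the random dynamics, so the two phases are not literally independent. I expect to need two auxiliary coupled processes — a lower-bounding one in which ring activations are only ever added as fresh seeds (for part~(\ref{T2i})) and an upper-bounding one that over-counts every potential frontier activation (for part~(\ref{T2ii})) — together with concentration estimates pinning the number of frontier activations at $\Theta(1/(n^2p^3))$, and a check that $\cA$ remains close enough to a uniform random subset of $R_n$ for the expectation heuristics above to stay valid throughout the evolution.
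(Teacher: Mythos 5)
Your skeleton matches the paper's: the split into a pure-$\gnp$ phase followed by a ring phase, the estimate that the ring contributes $\Theta(pT_1^2)=\Theta(1/(n^2p^3))$ new vertices (the paper's Lemma \ref{lem1}), and the identification of the window $p\ll n^{-3/4}$ by comparing this boost $\rho$ with $\sqrt{\ac}$, including the correct reading of \eqref{sp} as $\acx-A_0=o(\rho)$ (the paper's choice $\omega(n)=o\bigl((pn^{3/4})^{-2}\bigr)$ in \eqref{Ar1}). The genuine gap is the step you flag yourself and then leave unexecuted: ``feeding $\cA_0^*$ together with the $\Theta(\rho)$ ring-generated seeds into the supercritical half of Theorem 3.6''. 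That theorem requires an initial set chosen uniformly at random, \emph{independently} of the edges of $\gnp$, and after the first exploration phase this hypothesis fails in three concrete ways: the seeds $\cD_1$ are by construction vertices with an already exposed edge (mark) to the explored set; the remaining inactive vertices each may carry one mark, so in the continuation they activate with inflated probability $\pi_2(t)=\pi_1(t)+\pi_+(t)$ rather than $\pi_1(t)$; and the $T_1$ used vertices must be excluded from the pool. So the black-box theorem simply does not apply to the restarted process, and your ``two auxiliary coupled processes'' are precisely the missing mathematics, not a routine patch. The paper does this work explicitly: it analyses the drift of the second exploration phase directly, $A_2(t)-t=f_{T_1}(t)-n(tp)^3(1+o(1))/3+\cR_{T_1}(t)$ as in \eqref{di1}, with binomial fluctuations controlled by the martingale maximal inequality (Proposition \ref{P1}, Corollary \ref{CF}). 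Note also that the randomness to be beaten there is not the generic $\sqrt{\ac}$ window width but the fluctuation of $T_1$ itself entering the minimum of the drift, $\min_t f_{T_1}(t)=2pT_1^2+\Op\bigl(1/(n^3p^4)+\omega(n)/(\sqrt n\,p)\bigr)$ as in \eqref{min1}, imported from the CLT of Theorem 3.8 of \cite{JLTV} — your heuristic lands on the same threshold, but via a different (and unproved) mechanism. Finally, because of the cubic term the drift is only shown positive up to $c\tc$ with $c<6^{1/3}$, so the paper must still invoke a modified Lemma 8.2 of \cite{JLTV} (percolation from $c\tc$ rather than $3\tc$ active vertices) to reach $n-o(n)$; none of this appears in your plan.

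For part \ref{T2ii} your route is both weaker than necessary and exposed to the same dependence problem: dominating by ``a pure-$\gnp$ process whose effective initial set lies $\gg\sqrt{\ac}$ below $\acx$'' again means re-entering Theorem 3.6 with correlated seeds. The efficient observation — and the paper's actual argument (Corollary \ref{Cor1} \ref{Cor1iii} and Remark \ref{rem:theo2.2ii}) — is that for $p\gg n^{-2/3}$ one has $\E|\cD_1|=O(pT_1^2)=O(1/(n^2p^3))=o(1)$ on the event $T_1=O(\tc)$ supplied by Theorem 3.6 (i); since $|\cD_1|$ is integer valued, \whp{} $\cD_1=\emptyset$, the whole process terminates after the first exploration phase, and $A^*=T_1\le\tcx\le\tc(1+\eps)$ follows with no comparison argument at all. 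Your own bound $\Op(1/(n^2p^3))$ already implies this if you use integrality instead of weakening it to $\op(\sqrt{\ac})$; I recommend that replacement for part \ref{T2ii}, and for part \ref{T2i} carrying out the second-phase drift analysis along the paper's lines rather than attempting to reuse Theorem 3.6 as a black box.
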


Theorem \ref{T2} part \ref{T2i}
 describes the case when the addition of local edges even in dimension 1 changes the phase diagram.
Indeed, condition (\ref{sp}) tells us that almost percolation happens not only 
whenever
 $A_0 \geq \acx$ but even when $A_0 < \acx$, if  $A_0$  deviates  from 
$\acx$ at most by $\sqrt{\ac}o\left( \frac{1}{pn^{3/4}}\right)^2$ which  under the
assumption $n^{-1}\ll p \ll n^{-3/4}$ 
may be much larger than $\sqrt{\ac}$. If the latter occurs, then by Theorem 3.6 (i) \cite{JLTV} cited above under the same conditions percolation will not occur on the edges of 
$G_{n,p}$ only. 
Part \ref{T2ii} tells us that the critical window does not change for "large'' $p$, i.e., when  $n^{-2/3}\ll p \ll n^{-1/2}$. 
(Observe that it does not lead to a contradiction, since $a_c= a_c(p)$ 
changes accordingly.)

Theorem \ref{T2} does not cover the case $n^{-3/4}\leq p \leq n^{-2/3}$. 
Our analysis suggests, however, that almost percolation will happen 
even when $n^{-3/4}\leq p \leq n^{-2/3}$
 under the same condition (\ref{sp}) . Notice that the right side of (\ref{sp}) is bounded for $p\geq n^{-3/4}$ and moreover, it is $o(1)$ if  $p\gg n^{-3/4}$.
Hence, one may think that in all other cases but Part \ref{T2i}  the critical deviation $\acx-A_0 $  for the percolation is of order  $\sqrt{\ac}$.

\section{Discussion on higher dimensions}

We show that adding the structure of the one-dimensional grid 
 makes  the phase transition even sharper by decreasing the critical window. 

The challenge remains to study a bootstrap percolation process on 
$G_{n,p}^d$ with $d>1$.  In this case the effect of the local connections from the $d$-dimensional grid will be substantial, as one can readily see in the following calculations. Consider for simplicity a two-dimensional discrete torus $T=[1, \ldots ,N]^2$ with $n=N^2$ vertices and all edges between these vertices inherited from the two-dimensional lattice. Assume also that with a probability $p$ there is an edge between any pair of vertices, independent for different pairs. Denote the corresponding graph $G_{n,p}^2$. Assume that with probability $q=q(n)$
each  vertex is set initially to  be active 
independently of the rest and consider a bootstrap percolation with threshold $r=2$ as in \cite{H}.  
It is known  (see Holroyd \cite{H},  and Balogh, Bollob\'as, Duminil-Copin and Morris
\cite{BBDM} for the latest development in the area) that 
a complete percolation on torus $T$ 
with local edges only,
will happen w.h.p. 
if $q(n)/q_c(n,2,2)>1$, where
\begin{equation*}\label{crT}
 q_c(n,2,2):=\frac{\pi^2}{18 \log n} (1+o(1))=:\frac{c_0}{ \log n} (1+o(1)).
\end{equation*}
Otherwise, if $q(n)/q_c(n,2,2)<1$, the complete percolation  w.h.p. will not 
occur. 

Consider now a bootstrap percolation process on 
$G_{n,p}^2$
with 
$$A_0= \alpha  n \frac{c_0}{\log n} =: \alpha a_c = \ga n q_c \etto $$ 
initially active vertices, and with 
 \[p=\frac{1}{\sqrt{2na_c}}. \]
Let $0<\alpha<1$, and therefore
\[ \frac{A_0}{nq_c(n,2,2)} = \frac{\alpha}{1+o(1)}<1.\]
Using results \cite{H} one concludes that  on 
the subgraph $T = [1, \ldots ,N ]^2$ of $G_{n,p}^2$ induced by the  local connections only, 
  a complete percolation will not occur with probability tending to $1$ as $n\to \infty$ (or equivalently as $N \to \infty$). 
Also, by the Theorem 3.1 \cite{JLTV} (see above)
on 
the subgraph of $G_{n,p}$  of 
 $G_{n,p}^2$  bootstrap percolation process with a high probability
ends with only
$$ A^*_0 = (1-\sqrt{1-\alpha}) 2a_c=o(n)$$ 
active vertices. 
Hence, neither short edges nor random edges alone may yield with a high probability a complete percolation  on $G_{n,p}^2$ with the given parameters. 
However, one can choose $0<\alpha<1$ so that 
$$\frac{A^*_0}{np_c(n,2,2)}= \frac{2( 1-\sqrt{1-\alpha})}{1+o(1)}>1.$$
Then starting 
with $A^*_0$ vertices
one can argue using again results \cite{H} that 
 a complete percolation will happen with a high probability on the graph
$G_{n,p}^2$. This confirms that a superposition of two subcritical systems can lead to almost percolation. 

Besides these heuristics a complete analysis of 
bootstrap percolation on a graph with mixed edges in dimension greater than 1 remains to be an open problem.

\section{Proofs}\label{sec:model}

\subsection{Useful reformulation}

We shall distinguish the following three types of activation 
 of a vertex depending on the type of connections which
caused this activation:
 the  long range activation, the  short range activation and the mixed activation. The long range activation uses only random edges, 
we also call it "$\gnp$ activation".
The short range activation uses only local edges: if the vertices $i-1$ and $i+1$ are active then  the vertex $i$ becomes active as well. 
We say that the activation of a vertex is mixed if it is caused by one edge of each type.
See figure 1.

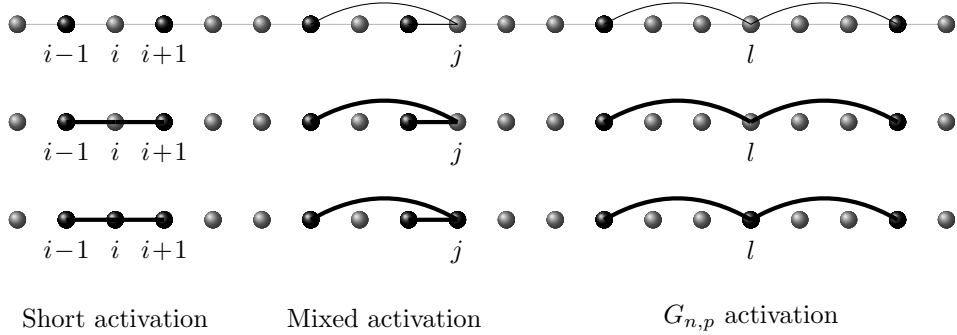
\begin{figure}[h]
\begin{center}
\begin{tikzpicture}[scale=1.3, every to/.style={bend right}]
\draw[color=gray, very thin, opacity=0.6](0.5,0)--(10,0);
\foreach \nodes in {0.5,1,...,10}
\shade[ball color=gray, opacity=0.6](\nodes,0) circle(2.5pt);


\shade[ball color=black](1,0) circle(2.5pt);
\shade[ball color=black](2,0) circle(2.5pt);

\draw (1,-0.1) node[below] {\footnotesize{$i \! -\! 1$}};
\draw (1.5,-0.1) node[below] {\footnotesize{$i$}};
\draw (2,-0.1) node[below] {\footnotesize{$i \! + \! 1$}};

\shade[ball color=black](3.5,0) circle(2.5pt);
\shade[ball color=black](4.5,0) circle(2.5pt);

\draw[-](5,0) to (3.5,0);
\draw[-](4.5,0)--(5,0);

\draw (5,-0.1) node[below] {\footnotesize{$j$}};

\shade[ball color=black](6.5,0) circle(2.5pt);
\shade[ball color=black](9.5,0) circle(2.5pt);

\draw[-](8,0) to (6.5,0);
\draw[-](9.5,0) to (8,0);

\draw (8,-0.1) node[below] {\footnotesize{$l$}};


\begin{scope}[yshift=-1cm]

\foreach \nodes in {0.5,1,...,10}
\shade[ball color=gray, opacity=0.6](\nodes,0) circle(2.5pt);
\shade[ball color=black](1,0) circle(2.5pt);
\shade[ball color=black](2,0) circle(2.5pt);

\draw[ultra thick](1,0)--(1.5,0);
\draw[ultra thick](2,0)--(1.5,0);

\draw (1,-0.1) node[below] {\footnotesize{$i \! -\! 1$}};
\draw (1.5,-0.1) node[below] {\footnotesize{$i$}};
\draw (2,-0.1) node[below] {\footnotesize{$i \! + \! 1$}};

\shade[ball color=black](3.5,0) circle(2.5pt);
\shade[ball color=black](4.5,0) circle(2.5pt);

\draw[ultra thick](5,0) to (3.5,0);
\draw[ultra thick](4.5,0)--(5,0);

\draw (5,-0.1) node[below] {\footnotesize{$j$}};

\shade[ball color=black](6.5,0) circle(2.5pt);
\shade[ball color=black](9.5,0) circle(2.5pt);

\draw[ultra thick](8,0) to (6.5,0);
\draw[ultra thick](9.5,0) to (8,0);

\draw (8,-0.1) node[below] {\footnotesize{$l$}};

\end{scope}


\begin{scope}[yshift=-2cm]

\foreach \nodes in {0.5,1,...,10}
\shade[ball color=gray, opacity=0.6](\nodes,0) circle(2.5pt);
\shade[ball color=black](1,0) circle(2.5pt);
\shade[ball color=black](1.5,0) circle(2.5pt);
\shade[ball color=black](2,0) circle(2.5pt);

\draw[ultra thick](1,0)--(1.5,0);
\draw[ultra thick](2,0)--(1.5,0);

\draw (1,-0.1) node[below] {\footnotesize{$i \! -\! 1$}};
\draw (1.5,-0.1) node[below] {\footnotesize{$i$}};
\draw (2,-0.1) node[below] {\footnotesize{$i \! + \! 1$}};

\draw (1.5, -1) node{\footnotesize{Short activation} };

\shade[ball color=black](3.5,0) circle(2.5pt);
\shade[ball color=black](4.5,0) circle(2.5pt);
\shade[ball color=black](5,0) circle(2.5pt);

\draw[ultra thick](5,0) to (3.5,0);
\draw[ultra thick](4.5,0)--(5,0);

\draw (5,-0.1) node[below] {\footnotesize{$j$}};

\draw (4.25, -1) node{\footnotesize{Mixed activation} };

\shade[ball color=black](6.5,0) circle(2.5pt);
\shade[ball color=black](8,0) circle(2.5pt);
\shade[ball color=black](9.5,0) circle(2.5pt);

\draw[ultra thick](8,0) to (6.5,0);
\draw[ultra thick](9.5,0) to (8,0);

\draw (8,-0.1) node[below] {\footnotesize{$l$}};

\draw (8, -1) node{\footnotesize{$\gnp$ activation} };

\end{scope}

\end{tikzpicture}
\end{center}
\caption{The $3$ different types of activation.}\label{graph:gnpact}
\end{figure}
In order to Analise the bootstrap percolation process on
$G_{n,p}^1$, we split the process of activation
in two distinct phases depending on the type of activation.

\subsubsection{First Exploration Phase.}\label{FEP}
Consider activation through the long (random)
connections only.

We say that the vertices are {\it neighbours} if there is at least one
edge between them. For any subgraph $G$ of $G_{n,p}^1$, we shall say
that two vertices are {\it $G$-neighbours} if there is an edge from
the subgraph $G$ between them.

We follow the algorithm for revealing the activated vertices as described in
\cite{JLTV}.
First, we change the time scale: we consider at each time step the activations from one vertex only. 

Given a set $\cA_0$ define $\cA_1(0)=\cA_0$. Choose $u_1 \in \cA_1(0)$ and give each of its neighbours a \emph{mark};
we then say
that $u_1$ is \emph{used}, and let
$\cZ_1(1)\=\set{u_1}$ be the set of used vertices
at time 1.

We continue recursively. At time $t>1$, choose (again uniformly at random)
a vertex
$u_{t}\in\cA_1(t-1)\setminus\cZ_1(t-1)$.
We give each $G_{n,p}$-neighbour of $u_{t}$ a new mark. 
Denote $M_v(t)$, the number of marks of the vertex $v \in V \setminus \cA(0)$ at time $t$, and let
 $\cS_1(t)$ be the set of  vertices outside of $\cA_1(t-1)$ with at least $2$ marks at time $t$: $\cS_1(t)= \left\{ v \notin \cA(0): M_v(t) \geq 2 \right\}$. 

Let us introduce Bernoulli random variables $\xi_{uv}\in Be(p)$ 
naturally associated with the edges of the random graph $G_{n,p}$: 
$\xi_{uv}=1$ if there is an edge between $u$ and $v$ in $G_{n,p}$, otherwise, $\xi_{uv}=0$.
Notice that $\xi_{u_iv}$  is also the indicator function that $v$ receives a mark at time $i$.
Denote $M_v(t)$, the number of marks of the vertex $v \in V \setminus \cA(0)$ at time $t$.
Then 
\begin{equation*}\label{cS}
\cS_1(t)= \left\{ v \notin \cA(0): M_v(t) \geq 2 \right\} =  \left\{v \not\in \cA (0): \sum_{i=1}^{t}\xi_{u_iv}\geq 2 \right\}.
\end{equation*}
Observe that the vertices of set $\cS_1(t)$ (more precisely, the labels of those vertices) are distributed uniformly over the set 
$\{1, \ldots, n\}$ (drawing  $|\cS_1(t)|$ points without replacement).
Using the independence of the connections on $\gnp$, we derive
\begin{equation*}\label{Sbin}
|\cS_1(t)| \stackrel{d}{=} \Bin (n-A_0,\pi_1(t)), 
\end{equation*}
where
\begin{equation}\label{p2}
\pi_1(t) = \P \left\{ v \in \cA_1 (t) \right\} = \P \left\{ M_v(t) \geq 2 \right\} = \P \left\{ \sum_{i=1}^t \xi_{u_i,v} \geq 2 \right\} = \P \left\{ \Bin (t,p) \geq 2 \right\}.
\end{equation}

Define now the set of  active vertices at time $t >0$ by
\begin{equation}\label{Act1}
\cA_1(t)= \cA_0 \cup \cS_1(t). 
\end{equation}
Finally, we let $\cZ_1(t)=\cZ_1(t-1)\cup\{u_{t}\}=\{u_s:s\le t\}$ be the set of used vertices.

The process stops when 
$\cA_1(t)\setminus\cZ_1(t)=\emptyset$, i.e.,  when 
all active vertices are used. We denote this time by $T_1$; 
\begin{equation}\label{eq:T1}
T_1 = \min \{t\ge0: \cA_1(t) \setminus \cZ_1(t)=\emptyset\}=
\min \{t\ge0: |\cA_1(t)|=t \}.
\end{equation}
 We call this phase an "exploration'' phase as we explore the long range connections of the vertices. The total number of active vertices at the end of this phase is denoted $|\cA_1(T_1 )| = T_1 $. 

\subsubsection{First Expansion Phase.}

 Now we take into account the structure of the local connections. Let us denote 
$R_n$ the corresponding subgraph of $G_{n,p}^1$ (which forms a Hamiltonian cycle on $V$).

 After the 1-st exploration phase we have a random set $\cA_1(T_1)$ of
 active vertices on $R_n$.  Hence, we may represent the set of inactive
 vertices  as a collection of paths on $R_n$. (A path on $R_n$ has a
 structure inherited from $R_n$: the consecutive vertices are pairwise
 connected.)

During the  "expansion'' phase, the set of active vertices $\cA_1 (T_1)$
may expand to its neighbours, or, in other words the paths of inactive
 vertices may become only shorter. More precisely, we define
the
 expansion phase in
 3
 different steps.
 
\begin{enumerate}
\item Any   vertex 
 which has two active (i.e., belonging to the set ${\cal A}_1(T_1)$) neighbours on $R_n$ becomes active.
This means that all the paths of inactive
 vertices which consist of a single vertex become active. 

After this step, we are left with the paths of inactive
 vertices which contain at least two vertices.
 Each of these vertices
  may have at most one mark assigned during the
 exploration phase.

 \item Any vertex (in any inactive path of length at least two)
 which  has a mark
 and which is
either an endpoint or is
 connected to an endpoint 
only through vertices each of which also has a mark, becomes
active.
  
\item  After the second
 step there may be again
paths of inactive
 vertices which contain a single vertex. Then step 1 is repeated,
 i.e., again any   vertex 
 which  has two active neighbours on $R_n$ becomes active.
\end{enumerate}
The third step completes  the expansion phase.

\begin{figure}[h]
\begin{center}
\begin{tikzpicture}[scale=1.3, every to/.style={bend left}] 
\draw[color=gray, very thin, opacity=0.6](-1,0)--(8,0);
\foreach \nodes in {0.5,1,...,8}
\shade[ball color=gray, opacity=0.6](\nodes,0) circle(2.5pt);
\shade[ball color=black](0.5,0) circle(2.5pt);
\shade[ball color=black](2,0) circle(2.5pt);
\shade[ball color=black](4.5,0) circle(2.5pt);

\shade[ball color=black](6,0) circle(2.5pt);
\shade[ball color=black](7,0) circle(2.5pt);

\shade[ball color=black](-1,0) circle(2.5pt);
\shade[ball color=gray, opacity=0.6](-0.5,0) circle(2.5pt);
\shade[ball color=gray, opacity=0.6](0,0) circle(2.5pt);


\draw[-](-1,0) to (1,0);
\draw[-](-1,0) to (1.5,0);

\draw[-](-1,0) to (2.5,0);
\draw(-1,0) to (3,0);

\draw[-](4,0) to (7,0);

\begin{scope}[yshift=-1.2cm]

\draw[color=gray, very thin, opacity=0.6](-1,0)--(8,0);
\foreach \nodes in {0.5,1,...,8}
\shade[ball color=gray, opacity=0.6](\nodes,0) circle(2.5pt);

\shade[ball color=black](0.5,0) circle(2.5pt);
\shade[ball color=black](2,0) circle(2.5pt);
\shade[ball color=black](4.5,0) circle(2.5pt);

\shade[ball color=black](6,0) circle(2.5pt);
\shade[ball color=black](6.5,0) circle(2.5pt);
\shade[ball color=black](7,0) circle(2.5pt);

\shade[ball color=black](-1,0) circle(2.5pt);
\shade[ball color=gray, opacity=0.6](-0.5,0) circle(2.5pt);
\shade[ball color=gray, opacity=0.6](0,0) circle(2.5pt);

\draw[ultra thick](6,0)--(6.5,0);
\draw[ultra thick](7,0)--(6.5,0);

\draw[-](-1,0) to (1,0);
\draw[-](-1,0) to (1.5,0);

\draw[-](-1,0) to (2.5,0);
\draw(-1,0) to (3,0);

\draw[-](4,0) to (7,0);

\draw (8.5,0) node[right] {\footnotesize{step 1}};
\end{scope}


\begin{scope}[yshift=-2.4cm]

\draw[color=gray, very thin, opacity=0.6](-1,0)--(8,0);
\foreach \nodes in {0.5,1,...,8}
\shade[ball color=gray, opacity=0.6](\nodes,0) circle(2.5pt);

\shade[ball color=black](0.5,0) circle(2.5pt);
\shade[ball color=black](1,0) circle(2.5pt);
\shade[ball color=black](1.5,0) circle(2.5pt);
\shade[ball color=black](2,0) circle(2.5pt);
\shade[ball color=black](2.5,0) circle(2.5pt);
\shade[ball color=black](3,0) circle(2.5pt);

\shade[ball color=black](4,0) circle(2.5pt);
\shade[ball color=black](4.5,0) circle(2.5pt);

\shade[ball color=black](6,0) circle(2.5pt);
\shade[ball color=black](6.5,0) circle(2.5pt);
\shade[ball color=black](7,0) circle(2.5pt);

\shade[ball color=black](-1,0) circle(2.5pt);
\shade[ball color=gray, opacity=0.6](-0.5,0) circle(2.5pt);
\shade[ball color=gray, opacity=0.6](0,0) circle(2.5pt);


\draw[ultra thick](-1,0) to (1,0);
\draw[ultra thick](-1,0) to (1.5,0);

\draw[ultra thick](-1,0) to (2.5,0);
\draw[ultra thick](-1,0) to (3,0);

\draw[ultra thick](4,0) to (7,0);

\draw (8.5,0) node[right] {\footnotesize{step 2}};
\end{scope}


\begin{scope}[yshift=-3.6cm]

\draw[color=gray, very thin, opacity=0.6](-1,0)--(8,0);
\foreach \nodes in {0.5,1,...,8}
\shade[ball color=gray, opacity=0.6](\nodes,0) circle(2.5pt);

\shade[ball color=black](0.5,0) circle(2.5pt);
\shade[ball color=black](1,0) circle(2.5pt);
\shade[ball color=black](1.5,0) circle(2.5pt);
\shade[ball color=black](2,0) circle(2.5pt);
\shade[ball color=black](2.5,0) circle(2.5pt);
\shade[ball color=black](3,0) circle(2.5pt);
\shade[ball color=black](3.5,0) circle(2.5pt);
\shade[ball color=black](4,0) circle(2.5pt);
\shade[ball color=black](4.5,0) circle(2.5pt);

\shade[ball color=black](6,0) circle(2.5pt);
\shade[ball color=black](6.5,0) circle(2.5pt);
\shade[ball color=black](7,0) circle(2.5pt);

\shade[ball color=black](-1,0) circle(2.5pt);
\shade[ball color=gray, opacity=0.6](-0.5,0) circle(2.5pt);
\shade[ball color=gray, opacity=0.6](0,0) circle(2.5pt);


\draw[-](-1,0) to (1,0);
\draw[-](-1,0) to (1.5,0);

\draw[-](-1,0) to (2.5,0);
\draw(-1,0) to (3,0);

\draw[-](4,0) to (7,0);

\draw[ultra thick](3,0)--(3.5,0);
\draw[ultra thick](4,0)--(3.5,0);

\draw (8.5,0) node[right] {\footnotesize{step 3}};
\end{scope}

\end{tikzpicture}
\end{center}
\caption{The 3 different steps.}
\end{figure}
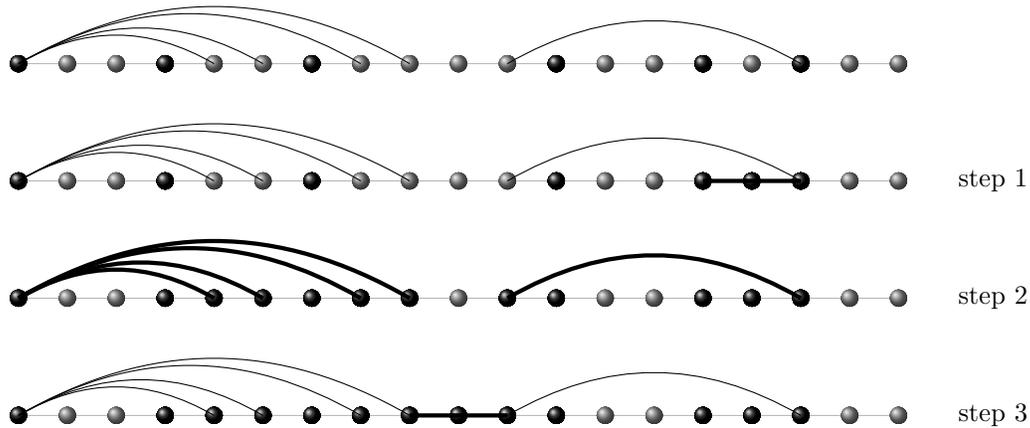

After the expansion phase, we may represent 
the set of inactive vertices
 as a collection of paths on $R_n$ each of which
 has the following properties:
\begin{romenumerate}
\item any path has at least two vertices,
\item the endpoints do not bear  a mark  but
 all the other vertices of the intervals may have at most one mark 
(assigned during the
 exploration phase).
\end{romenumerate}
Let us denote $\cD_1$ the set of vertices activated during the $1$-st
expansion phase.

At the end of the first expansion phase, 
we have $T_1+ |\cD_1|$ active vertices: $T_1$ of them have been used
and the set $\cD_1$ is still unused. 

\subsubsection{Alternating the phases.}

Having completed the 1st expansion phase, we shall alternate exploration  and expansion phases.
We shall denote ${\cal A}_{k}(T_k)$ and $\cD_k$ the sets of vertices acquired in the $k$-th exploration and expansion phases, correspondingly, $k\geq 1$. Notice  that  the sets ${\cal A}_{k}(T_k)$ and $\cD_k$ are disjoint.

We assume that after the $k$-th exploration phase we have used all vertices in 
$ {\cal A}_{k}(T_k)$ so that  $|{\cal A}_{k}(T_k)|=T_k$.
Let 
\[{\cal A}^k :=  \cup_{i=1}^{k} {\cal A}_{i}(T_i),\]
which is the set of all used vertices.
Still we have the set $\cD_k$ (assuming $\cD_k$ is not empty) of active vertices to explore: the ones which were
activated during the $k$-{th} expansion
phase.

Given the sets ${\cal A}_{i}(T_i), i\leq k,$ and $\cD_k$, 
let us  define the $k+1$-st exploration phase similar to the first one: we 
restart the process, setting again  time $t=0$,
but now on vertices $V \setminus {\cal A}^k$, 
among which the set of initially active vertices is
\[{\cal A}_{k+1}(0):= \cD_k.\]
This set  plays the same role as ${\cal A}_1(0)$ in the description
of the first exploration phase. 
Notice also that 
\[|V \setminus {\cal A}^k|= n -
\sum_{i=1}^{k}T_i.\]

We explore the vertices (i.e., assign
marks to their $G_{n,p}$-neighbours)  of $\cD_{k}$ one
at a time,  calling them again $u_1, u_2, \ldots $ . Observe, however,
that some of the vertices may have one mark from set ${\cal A}^k$ and
this makes the difference with the first exploration phase. More
precisely,  we have two
types of vertices: the vertices on the  boundary of set $ {\cal A}^k
\cup \cD_k$ which 
do not have any
random  edge to ${\cal A}^k$, and  the rest of  vertices (i.e., the ones
in $V \setminus ({\cal A}^k \cup
\cD_k \cup {\partial}_1({\cal A}^k \cup \cD_k)) $ which may have at most one
random  edge  to the set ${\cal A}^k$ (i.e., have a mark). 
Recall that $\partial_1 \left( \cA^k \cup \cD_k\right)$ is the outer boundary of $ \cA^k \cup \cD_k$, see \eqref{eq:outer}.
Then the set of vertices
activated during the first  $t$ steps of the $k+1$-st
exploration phase  is 
\begin{align*}\label{Skb}
 \cS_{k+1}(t):= & \left\{v \in {\partial}_1 ( {\cal A}^k \cup \cD_k): 
\sum_{i=1}^t\xi_{u_i v}\geq 2
 \right\}
\\ & \cup  \left\{v  \not\in {\cal A}^k \cup \cD_k \cup {\partial}_1(\cD_k
   \cup {\cal A}^k): \sum_{i=1}^t\xi_{u_i v} + \xi_v(k)  \geq 2 \right\},
   \nonumber
\end{align*}
where $\xi_v(k) \stackrel{d}{=} \xi(k)$ is an independent Bernoulli random variable which equals 
 one with the probability that an inactive
vertex $v$ has precisely one mark after the $k$-th exploration phase, i.e.,
\begin{equation}\label{pixi}
 \P\{ \xi_v(k)=1\}= \P\left\{ \Bin (|{\cal A}^k|,p) =1 \big| \Bin (|{\cal A}^k|,p)<2\right\}=
\frac{|{\cal A}^k|p}{1+(|{\cal A}^k|-1)p}.
\end{equation}
Let us define now
\begin{equation*}\label{pik}
 \pi_{k+1}(t)=\P\{\Bin (t,p) + \xi(k) \geq 2\},
\end{equation*}
where $\xi(k)$ and the binomial random variable are independent.
Notice that 
\begin{align*}\label{pik1}
 \pi_{k+1}(t) & =\P\{\Bin (t,p) \geq 2\} +\P\{\Bin (t,p)=1\}\P\{
 \xi(k)=1\}
 \\ &
=\pi_{1}(t)+
 \frac{|{\cal A}^k|p}{1+(|{\cal
     A}^k|-1)p}(1-p)^{t-1}pt,
 \nonumber
\end{align*}
where $\pi_{1}(t)$ is defined by  (\ref{p2}).
Then the distribution of $S_{k+1}(t):=|\cS_{k+1}(t)|$ is
\begin{equation}\label{Sbin*}
S_{k+1}(t)\stackrel{d}{=} \Bin  \left(n-|{\cal A}^k|-|\cD_k|-|{\partial}_1(\cD_k
   \cup {\cal A}^k)|,\pi_{k+1}(t)\right) + \Bin  \left(|{\partial}_1(\cD_k
   \cup {\cal A}^k)|,\pi_{1}(t)\right),
\end{equation}
where the binomial variables are independent. 
Define also (as in (\ref{Act1})) for $t>0$
\[{\cal A}_{k+1}(t)= \cD_k \cup \cS_{k+1}(t),  \]
which is the set of active vertices at the step $t$  of the $k+1$-st exploration phase.
Then, assuming $\cD_k \neq \emptyset$, the moment
\begin{equation*}\label{Tk}
T_{k+1}:= \min \{ t > 0 : |\cA_{k+1}(t)|  = t \} 
\end{equation*}
 is the first time when all the available active vertices are explored,
i.e., we have found all the $G_{n,p}$-neighbours of active vertices. 
This completes the $k+1$-st exploration phase. 

The $k+1$-st expansion phase is similar to the first one.
Recall that after the $k+1$-st  exploration phase we
 may
 represent the set of all remaining inactive vertices
 as a collection of intervals on $R_n$. Each of the vertices
 of these intervals may have at most one mark (assigned during any of the
 previous exploration phases). Then  at the $k+1$-st expansion phase 
 any vertex 
 which either has two active $R_n$-neighbours, or it has a mark and it 
 is connected to an endpoint with a mark 
through the vertices each of which has also a mark, becomes
active. Finish the phase with step 3 by activating the vertices that have two active nearest neighbours on $R_n$.
We denote $\cD_{k+1}$ the set of all vertices activated during this phase.

Let us now define  the process of bootstrap percolation on $\gnp^1$ 
as
\begin{equation*}\label{BP}
{\cal A}(t)= \cup_{i=1}^{k-1}{\cal A}_{i}(T_i) \cup {\cal
  A}_{k}\left(t-\sum_{i=1}^{k-1} T_i \right), \ \ \ \ \
\sum_{i=1}^{k-1} T_i \leq t < \sum_{i=1}^{k} T_i, \ \ \ \ \ k\geq 1.
\end{equation*}
The process of bootstrap percolation on $\gnp^1$ 
stops at time $T$ which is
\begin{equation}\label{ST}
T=\min\{t: |{\cal A}(t)| =t\}.
\end{equation}
It follows then that  
\begin{equation}\label{ST2}
T=\sum_{k=1}^K T_k ,
\end{equation}
 where
\begin{equation}\label{ST1}
K=\min\{k: \cD_k = \emptyset \},
\end{equation}
meaning that no vertex is  activated during the $k$-{th} expansion phase.
We shall denote
\[{\cal A}^*:={\cal A}(T).\]
Notice that by (\ref{ST}) and (\ref{ST2}) we have 
\begin{equation*}\label{Afi}
|{\cal A}^*|=\sum_{k=1}^K T_k .
\end{equation*}

\begin{rem}\label{RA}
By changing the time and considering the activation in different order,
we do not change the limiting set ${\cal A}^*$ of activated vertices
which depends only on 
the initial set ${\cal A}$. 
\end{rem}

\subsection{The number of vertices activated in an expansion phase.}

We begin with  
the
first expansion phase, namely, we shall study the set $\cD_1$.

\begin{lem}\label{lem1}
Let  $\cA_1(T_1)$
be a set of vertices uniformly distributed on $V=\{1,\ldots, n\}$,
and assume that
 $|\cA_1(T_1)| = T_1 \leq \frac{2}{np^2}$, where $n^{-1}\ll p \ll n^{-1/2}$. Then 
\begin{equation*}\label{eq:lem1}
 |\cD_1|  =  \left\{
\begin{array}{ll}
2pT_1^2
 +O_{L^1}\left(  \frac{pT_1^2}{pn} + \sqrt{p T_1^2}\right) , & \mbox{ if } p T_1^2 \rightarrow \infty, \\ \\
O_{L^1}\left(    pT_1^2 \right), & \mbox{ otherwise}. \\
\end{array}
\right.
\end{equation*}
\end{lem}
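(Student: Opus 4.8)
The plan is to condition on the active set $\cA_1(T_1)$, so that the $n-T_1$ inactive vertices form a collection of disjoint paths (gaps) on the ring $R_n$, and then to describe the residual randomness. Since $\cA_1(T_1)$ is uniform and, under the hypotheses, $T_1\le 2/(np^2)=o(n)$, the gap lengths are, up to a weak dependence through the constraint $\sum L=n-T_1$, approximately i.i.d.\ geometric with mean $\approx n/T_1$, while the number of gaps equals $T_1+O(T_1^2/n)$. Conditionally on the positions, each inactive vertex carries a mark independently with probability
\begin{equation*}
 q \= \P\{\Bin(T_1,p)=1\mid \Bin(T_1,p)<2\}=\frac{T_1p}{1+(T_1-1)p}=T_1p\,\bigpar{1+O(T_1p)},
\end{equation*}
by the computation in \eqref{pixi} with $|\cA^1|=T_1$; the independence across distinct inactive vertices holds because their marks arise from disjoint sets of random edges to the used vertices. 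I would then write $|\cD_1|=\sum_{\text{gaps}}D$, where $D$ is the number of vertices of a given gap activated during the expansion phase.

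Next I would analyse a single gap of length $L$. In Step~2 the activation peels off, from each endpoint, the maximal run of consecutive marked vertices; let $a,b$ be the two run lengths, each geometric with $\P(a\ge j)=q^{j}$ and mean $q/(1-q)=q\bigpar{1+O(q)}$, and mutually independent since they are supported on disjoint vertices. For a long gap $D=a+b$; short gaps are either completely activated or reduced to a single vertex that Step~1/Step~3 then activates, and the gaps of length $1$ present initially contribute $1$ each in Step~1. Hence
\begin{equation*}
 \E|\cD_1|=\underbrace{(\text{number of gaps})\cdot\E(a+b)}_{\text{Step 2}}+\underbrace{(\text{number of gaps of length }1)}_{\text{Step 1}}+(\text{short-gap corrections}).
\end{equation*}
The Step~2 term equals $T_1\cdot 2q\bigpar{1+o(1)}=2pT_1^2\bigpar{1+O(T_1p)}$, and every correction is $O(T_1^2/n)$: the number of gaps of length $1$ is $\approx T_1\cdot(T_1/n)=T_1^2/n$, the gap‑count error contributes $O(qT_1^2/n)$, replacing $q$ by $T_1p$ costs $O(p^2T_1^3)$, and the events making the two runs overlap are exponentially rare in $L$. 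Using $T_1\le 2/(np^2)$ (so $p^2T_1n\le 2$) one checks each of these is $O\bigpar{T_1^2/n}=O\bigpar{pT_1^2/(pn)}$, giving $\E|\cD_1|=2pT_1^2+O\bigpar{pT_1^2/(pn)}$.

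For the fluctuation I would bound the variance. Writing the Step~2 contribution as a sum over the $\approx 2T_1$ endpoints of the peel lengths, which are conditionally independent given the positions (disjoint supports), the total variance is $\approx 2T_1\Var(a)\approx 2T_1\cdot q=2pT_1^2$, so $\Var|\cD_1|=O(pT_1^2)$ after adding the lower‑order Step~1/3 fluctuations. By Cauchy--Schwarz, $\E\bigl|\,|\cD_1|-\E|\cD_1|\,\bigr|\le\bigpar{\Var|\cD_1|}^{1/2}=O(\sqrt{pT_1^2})$, which together with the mean estimate gives the first case. In the regime $pT_1^2=O(1)$ no concentration is available nor needed: since $np\gg1$ one has $T_1^2/n\le pT_1^2$ and $\sqrt{pT_1^2}\gtrsim pT_1^2$, so the bound $\E|\cD_1|=O(pT_1^2)$ already yields $|\cD_1|=O_{L^1}(pT_1^2)$.

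The main obstacle is to make rigorous the two independence claims underlying both the mean and the variance: that conditionally on $\cA_1(T_1)$ the marks are genuinely i.i.d.\ $\Be(q)$ (this rests on the algorithmic revealing of \eqref{pixi} together with the disjointness of the edge sets feeding distinct inactive vertices), and that the per‑gap peel lengths are sufficiently independent despite the global constraint $\sum L=n-T_1$ and the coupling of Steps~1--3 on short gaps. The delicate part is therefore the bookkeeping that confines all boundary effects — gaps too short for the two runs to be independent, overlaps triggering full activation, and the extra Step~3 activations — to the error term $O\bigpar{pT_1^2/(pn)+\sqrt{pT_1^2}}$ rather than the main term $2pT_1^2$.
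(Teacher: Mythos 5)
Your proposal is correct and takes essentially the same route as the paper's proof: conditioning on the uniform set $\cA_1(T_1)$, i.i.d.\ marks with probability $p_1=T_1p/(1+(T_1-1)p)$ on the inactive gaps, a two-sided endpoint-peel analysis per gap giving the mean $2pT_1^2$ with all short-gap/step-1/step-3 corrections confined to $O(T_1^2/n)$ via $T_1\le 2/(np^2)$, and a variance bound of order $pT_1^2$ converted into the $L^1$ error by Cauchy--Schwarz. The only (cosmetic) difference is bookkeeping: the paper takes as its main term the indicator count $D$ of gaps of length $>2$ with exactly one activated vertex (Bernoulli with success probability $2p_1(1-p_1)^2$, equivalent to your $\E(a+b)=2q+O(q^2)$ to first order) and dumps every multi-vertex activation into a nonnegative remainder $\cR$ controlled purely through $\E\cR=O(T_1^2/n)$, which makes the variance step trivial and disposes of precisely the truncation/overlap dependence you flag as the delicate point.
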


\begin{rem}
Notice that the random variable $T_1=A_0^*$ is described by Theorems 3.1 \cite{JLTV} and 3.6 \cite{JLTV} cited above.
\end{rem}
\begin{proof}[Proof of Lemma \ref{lem1}]
  For simplicity of the notations let us set here $T_1=k$. Given a subset $\cA_1(T_1) =\{i_1, \ldots ,i_k\}$ (assume that $i_1<
\ldots <i_k$)
define sets (maybe empty)
\begin{equation*}\label{I}
I_1=\{i_k+1, \ldots, n,1, \ldots, i_1-1\}, I_j=\{i_{j-1}+1, \ldots,
i_j-1\}, j=2, \ldots, k.
\end{equation*}
These are the paths (i.e. consecutively connected vertices) on $R_n$ consisting of vertices which remain
inactive after the 1-st exploration phase. 
Hence,
\[ \cA_1(T_1) \cup \left( \cup_{j=1}^{k}I_j\right)=\{1, \ldots, n\},\]
and
\[k+ \sum_{j=1}^{k}|I_j|=n.\]
Define also
\[N_l=\#\{j \geq 1: |I_j|=l\}, \ l\geq 0.\]
Assuming the uniform distribution of the set $\cA_1(T_1)$, we derive
for all $l$ such that $l\leq n-k$
\begin{equation}\label{PIl}
\P \{|I_j| =  l  \}=\frac{\left(
  \begin{array}{c}
    n-2-l\\
    k-2
    \end{array}
\right)
}{\left(
  \begin{array}{c}
    n-1\\
    k-1
    \end{array}
\right)
}.
\end{equation}
In particular, this yields 
\begin{equation*}\label{I1}
\P \{|I_j|= 1 \}
=\frac{n-k}{n-1}\ \frac{k-1}{n-2},
\end{equation*}
and
\begin{equation}\label{I2}
\P \{|I_j| \leq  2  \}= 3\frac{k}{n} + o \left( \frac{k}{n} \right),
\end{equation}
when $k=o(n)$. We have $k=T_1 \leq \frac{2}{np^2} = o\left( \frac{1}{p}\right) = o(n)$ since $p \gg \frac{1}{n}$.

Recall that any vertex of any  $I_j$
has one mark with probability defined by \eqref{pixi}
\begin{equation}\label{p1}
p_1 :=  \frac{kp}{1+(k-1)p},
\end{equation}
independent of the other vertices.

For all $l>1$ and $j\geq 1$, given that $|I_j|=l$,
let $M_{j}(l)$ be the (random) number of
vertices in $I_j$ which have a mark and which are either
the
endpoints of $I_j$ or they are
connected in $R_n$  (i.e.,  through the deterministic edges) to the
endpoints of  $I_j$ through vertices with marks. Observe that
only in the case when $M_{j}(l)=l-1$, the remaining 
inactive vertex of the path $I_j$ has 2 active $R_n$-neighbours and
it will become active as well by the end of the expansion phase, by step 3 of the phase.
 This leads to the
following representation of the number of vertices in the set $\cD_1$:
\begin{equation}\label{D1}
|\cD_1|=N_1 + \sum_{l>1}\sum_{j\geq 1} \ind \{|I_j|=l\}(M_{j}(l)
+ \ind \{M_{j}(l)=l-1\}).
\end{equation}

Note that  the distribution of $M_{j}(l)$ does not depend on $j$; we set
$M(l) \stackrel{d}{=}M_{j}(l)$.
It is straightforward to derive for all $l\geq 2$
\begin{equation*}\label{pM}
\P \{M(l)  \geq l-1\}
= p_1^l + l(1-p_1) p_1^{l-1},
\end{equation*}
and for all $0<m\leq l-2$
\begin{equation*}\label{pM1}
\P \{M(l)  =m \}
=(m+1)p_1^{m}(1-p_1)^2.
\end{equation*}

We shall also define a random variable  $M_{j}(|I_j|)$ which,
conditionally on $|I_j|=l$, has the same distribution as $M_{j}(l)$.
In particular,
\begin{equation}\label{pM2}
\P \{M_{j}(|I_j|)  =1 \}= 2p_1(1-p_1)^2\P \{|I_j|>2 \} + \P \{|I_j|=1\}.
\end{equation}
Now we can rewrite (\ref{D1}) as 
\begin{equation}\label{D12}
|\cD_1|= \sum_{j\geq 1} \ind \left\{ \{ M_{j}(|I_j|)=1\}\cap \{ |I_j|>2\}  \right\} + {\cal R}=:D + {\cal R},
\end{equation}
where
\[{\cal R}= N_1 +\sum_{l>1}\sum_{j\geq 1} \ind \{|I_j|=l\}(M_{j}(l)
+ \ind \{M_{j}(l)=l-1\}) \ind \{M_{j}(l)>1\}
\]
\[+ 2 \sum_{j\geq 1} \ind \{|I_j|=2\}\ind \{M_{j}(l)=1\}.
\]
Compute now
\begin{equation}\label{ED1}
\E \{{\cal R}  \mid N_1, \ldots ,N_n\}  
\end{equation}
\[
=
N_1 +  2 N_2 \P \{ M(2) >0\} 
+  \sum_{l\geq 3}  N_l
\left( \sum_{m=2}^{l-2}m \P \{ M(l)  =m \}
  +l\P \{M(l)  \geq l-1\} \right)\]
\[ = N_1 +  2 N_2 (2p_1- p_1^2)+  3 N_3 \P \{M(3)  \geq 2\} \]
\[ +   \sum_{l\geq 4}  N_l
\left( (1-p_1)^2\sum_{m=2}^{l-2}m(m+1)p_1^{m}
  +l(l(1-p_1) p_1^{l-1} +  p_1^l) \right)\]
\[
 = N_1 +  2 N_2 (2p_1- p_1^2)+  3 N_3(p_1^3+3(1-p_1)p_1^2 )
\]
\[
 +   \sum_{l\geq 4}l  N_l (1-p_1)^2
\left(
l(1-p_1) p_1^{l-1} +   p_1^l  \right)  +  \sum_{l\geq 4}  N_l (1-p_1)^2\sum_{m=2}^{l-2}m(m+1)p_1^{m}
\]
\[
 \leq N_1 +  4 N_2 p_1 +  9 N_3 p_1^2 
+  \left( \max_{l\geq 4} l^2p_1^{l-1}\right) \sum_{l\geq 4}  N_l  +
(6p_1^2 +O(p_1^3 )) \sum_{l\geq 4}  N_l. \]
Since $p_1=o(1)$,  and $\sum_{l\geq 1}  N_l\leq
k$, we derive from (\ref{ED1}) with a help of (\ref{PIl}):
\begin{equation}\label{ED3}
\E \{{\cal R}   \} 
\leq  O(k^2/n)  +O(kp_1^2 ) = O(k^2/n)  +O(k^3p^2 )= O(k^2/n)=o(pk^2) . 
\end{equation}

Therefore, we have $\cR = O_{L^1} (pk^2)$ and thus $\cR = o_p (pk^2)$.
Consider now the main term in (\ref{D12}).
Let
\begin{equation*}\label{Ng2}
N_{> 2}=\sum_{i=1}^{k}\ind \{|I_i|>2\}=k- \sum_{i=1}^{k}\ind
\{|I_i|\leq 2\},
\end{equation*}
and let  $\eta_i$, $i\geq 1$, be
independent copies
of the  Bernoulli
random variable $\eta$ such that
\begin{equation}\label{eta}
\P \{\eta  =1 \} = \P \left\{M_{j}(|I_j|)  =1 \big| |I_j|>2\right\}=
2p_1(1-p_1)^2,
\end{equation}
as defined in (\ref{pM2}). Then we have the following equality in distribution:
\begin{equation*}\label{D13}
D = \sum_{j\geq 1} \ind \left\{ \{ M_{j}(|I_j|)=1\}\cap \{ |I_j|>2\}
\right\} \stackrel{d}{=}
\sum_{i=1}^{N_{> 2}}\eta_i.
\end{equation*}
With the help of \eqref{ED3}, we deduce that 
\begin{equation}\label{Fe1}
\E \left(|\cD_1|\right) = \E (D) + \E (\cR) = 2k^2 p \etto.
\end{equation}
Thus we have $|\cD_1| = O_{L^1} (k^2p)$, moreover, if $k^2p \to \infty$ then $|\cD_1| = 2k^2p \left(1+o_{L^1}(1)\right)$.
It is straightforward to compute, taking into account (\ref{eta}) and
(\ref{I2}), that
\begin{equation}\label{ED}
\E D  =  \E \eta \E N_{> 2}= 2kp_1(1-p_1)^2(1-O(k/n)),
\end{equation}
and
\begin{align*}\label{D14}
\Var(D) & = \E \left( \Var(D\mid N_{> 2})\right) + \Var(\E (D \mid N_{> 2}))
\\ & = \Var \eta \E N_{> 2} + ( \E \eta)^2 \Var  N_{> 2}\leq 2p_1  k  +
(2p_1)^2  k ^2  O\left( \frac{ k }{n}\right).
\nonumber
\end{align*}
The last bound under assumption 
$k\leq 2/(np^2)$ and $p\geq n^{-1}$ yields
\begin{equation}\label{D151}
\Var(D)= 
O\left( p_1 k+  (p_1k)^2 \frac{k}{n} \right), \mbox{ if } p_1 k \rightarrow \infty.
\end{equation}
Now using  (\ref{D12}) we have
\[|\cD_1|= D + \cR = \E D + 
O_{L^1}\left( \sqrt{ \Var(D)}  \right)+ O_{L^1}\left( \E (\cR) \right) ,\]
which 
 together with 
(\ref{ED3}), (\ref{ED}) and   (\ref{D151})
confirms that 
 \[
|\cD_1|= 
2pk^2 + O_{L^1}\left( \sqrt{pk^2} + \sqrt{\frac{k}{n}}
pk^2 + {\frac{k^2}{n}}
\right) ,  \mbox{ if } p k^2 \rightarrow \infty. 
\]
Taking again into account that $k\leq 2/(np^2)$ 
we derive from here
\begin{equation}\label{Fe2}
|\cD_1|= 
2pk^2
+
O_{L^1}\left( \frac{k^2}{n} + \sqrt{pk^2} 
\right) ,  \mbox{ if } p k^2 \rightarrow \infty. 
\end{equation}
In the case when $p k^2$ is bounded, we have simply by (\ref{Fe1})
that 
\[|\cD_1|= O_{L^1}\left( {pk^2}\right).\]
This together with (\ref{Fe2}) finishes the proof of the lemma. 
\end{proof}

\begin{cor}\label{Cor1} Let  $\cA_1(T_1)$
be a set of vertices uniformly distributed on $V=\{1,\ldots, n\}$. Given  that
 $|\cA_1(T_1)| = k = O \left( \frac{1}{np^2} \right)$, the following holds:
\begin{romenumerate}
\item \label{Cor1i} if  $  n^{-1} \ll p \ll  n^{-2/3}$, then 
\[
 |\cD_1| 
=  2pk^2 \left(1+ o_{L^1}(1) \right);
\]

\item \label{Cor1ii} if  $  pn^{2/3} \to const >0$, then 
\[
 |\cD_1| =  O_{L^1}(1);
\]

\item \label{Cor1iii} if  $  n^{-2/3} \ll p \ll  n^{-1/2}$, then 
\[
 |\cD_1| =  o_{L^1}(1).
\]
\end{romenumerate}
\end{cor}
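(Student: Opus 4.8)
The plan is to read off all three cases directly from Lemma~\ref{lem1} by substituting the hypothesis $k=O\left(\frac{1}{np^2}\right)$ and tracking the order of the single governing quantity $pk^2$. Indeed, with $k$ of order $1/(np^2)$ one has
\[
pk^2 \asymp \frac{p}{(np^2)^2}=\frac{1}{n^2p^3},
\]
so the dichotomy in Lemma~\ref{lem1} between the regime $pk^2\to\infty$ and the regime where $pk^2$ stays bounded is governed entirely by the position of $p$ relative to the threshold $n^{-2/3}$: one has $n^2p^3\to 0$, $n^2p^3\to\mathrm{const}$, or $n^2p^3\to\infty$ according as $p\ll n^{-2/3}$, $pn^{2/3}\to\mathrm{const}$, or $p\gg n^{-2/3}$. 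This matches exactly the three cases \ref{Cor1i}--\ref{Cor1iii}.

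For case \ref{Cor1i}, where $n^{-1}\ll p\ll n^{-2/3}$, I would first note that $pk^2\asymp 1/(n^2p^3)\to\infty$, so the first branch of Lemma~\ref{lem1} applies and gives
\[
|\cD_1|=2pk^2+O_{L^1}\left(\frac{pk^2}{pn}+\sqrt{pk^2}\right).
\]
It then remains to check that both error terms are negligible against the main term $2pk^2$. The first is handled by $\frac{pk^2}{pn}=pk^2\cdot\frac{1}{pn}=o(pk^2)$, using $pn\gg1$ which follows from $p\gg n^{-1}$; the second by $\sqrt{pk^2}=pk^2\cdot(pk^2)^{-1/2}=o(pk^2)$, using $pk^2\to\infty$. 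Hence $|\cD_1|=2pk^2\bigpar{1+o_{L^1}(1)}$, as claimed.

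Cases \ref{Cor1ii} and \ref{Cor1iii} are even shorter, since here only the upper bound on $k$ is needed. When $pn^{2/3}\to\mathrm{const}>0$ we have $pk^2=O\bigpar{1/(n^2p^3)}=O(1)$, and when $n^{-2/3}\ll p\ll n^{-1/2}$ we have $pk^2=O\bigpar{1/(n^2p^3)}=o(1)$; in both situations $pk^2$ is bounded, so the second branch of Lemma~\ref{lem1} yields $|\cD_1|=O_{L^1}(pk^2)$, which is $O_{L^1}(1)$ in case \ref{Cor1ii} and $o_{L^1}(1)$ in case \ref{Cor1iii}.

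There is essentially no hard step here: Lemma~\ref{lem1} already contains all the probabilistic content, and the corollary is a bookkeeping exercise in substituting the scale $k\asymp 1/(np^2)$ and comparing powers of $n$ and $p$. The only point requiring a little care is case \ref{Cor1i}, where the multiplicative form $2pk^2\bigpar{1+o_{L^1}(1)}$ genuinely needs $pk^2\to\infty$ (and hence that $k$ really is of order $1/(np^2)$, not merely bounded above by it), together with the verification that the two error contributions in Lemma~\ref{lem1} are both of smaller order than $pk^2$ throughout the range $n^{-1}\ll p\ll n^{-2/3}$.
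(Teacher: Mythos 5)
Your proposal is correct and coincides with the paper's own treatment: the authors give no written proof of Corollary \ref{Cor1}, regarding it as exactly the substitution $k=O\bigpar{1/(np^2)}$, $pk^2\asymp 1/(n^2p^3)$ into the two branches of Lemma \ref{lem1} that you carry out, including the checks that $pn\gg 1$ and $pk^2\to\infty$ kill the error terms in case \ref{Cor1i}. Your closing caveat is also well taken: the multiplicative form in case \ref{Cor1i} really does require $pk^2\to\infty$, hence $k$ of exact order $1/(np^2)$ rather than merely $O\bigpar{1/(np^2)}$, which is the regime in which the corollary is applied (e.g.\ $T_1\approx t_c$ in the critical case).
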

\hfill$\Box$

\begin{rem}\label{rem:theo2.2ii}
Notice that $|\cD_1| = o_{L_1} (1)$ in Corollary \ref{Cor1} \ref{Cor1iii} implies that 
\whp \ the process of bootstrap percolation stops after the first expansion phase. 
This allows us to  prove Theorem \ref{T2} \ref{T2ii}.
If $\frac{A_0-\acx}{\sqrt{\ac}} \to - \infty$, then by 
Theorem 3.6  (i) \cite{JLTV}  \whp{}  $|\cA_1(T_1)| = 
O \left( \frac{1}{np^2} \right)$.
Hence, if $  n^{-2/3} \ll p \ll  n^{-1/2}$  Corollary \ref{Cor1} \ref{Cor1iii} yields
that  \whp{} $\Ax = A(T_1) \leq \tc (1+\epsilon)$.
\end{rem}

For the remaining (the second and further on) expansion phases 
we will  need only the upper bounds for the number of activated vertices in the subcritical case.
\begin{lem}\label{CJ2}
Let $n^{-1}\ll p \ll n^{-2/3}$. 
Then for any $k>1$ given
 $\sum_{l=1}^kT_l< \frac{3}{np^2}$, one has
\begin{equation*}\label{F1}
  |\cD_k| \leq 
\left\{
\begin{array}{l}
4p T_k\sum_{l=1}^kT_l   +
 O_{L^1} \left( \frac{pT_k\sum_{l=1}^kT_l }{pn} +
\sqrt{pT_k\sum_{l=1}^kT_l}
\right), \
    \mbox{ if } 
p\left(T_k \sum_{l=1}^kT_l\right) \rightarrow \infty, \\ \\
O_{L^1}\left( p  T_k \sum_{l=1}^kT_l  \right),  \text{ otherwise}. 
\\
\end{array}
\right.
\end{equation*}
\end{lem}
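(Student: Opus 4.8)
The plan is to reproduce the path decomposition from the proof of Lemma~\ref{lem1}, now applied to the inactive paths left after the $k$-th \emph{exploration} phase, while exploiting one structural fact that was unavailable in the first phase. After the $(k-1)$-st expansion phase every inactive path is \emph{saturated}: there are no single-vertex gaps (they were filled in step~1), and both endpoints of every path carry no mark (a marked endpoint has one active $R_n$-neighbour plus one active random neighbour, so it would already have been activated in step~2). Hence a vertex can be added to $\cD_k$ only in the immediate neighbourhood of the structure freshly created in the $k$-th exploration phase --- next to one of the at most $T_k$ newly used vertices of $\cA_k(T_k)$, or at an old endpoint that has just acquired a new mark. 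This localisation is exactly what replaces the naive factor $\bigl(\sum_{l=1}^kT_l\bigr)^2$ by $T_k\sum_{l=1}^kT_l$.

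First I would record the local distributional inputs, as in the first phase. Writing $\Sigma_k:=\sum_{l=1}^kT_l=|\cA^k|$, the used set $\cA^k$ is uniform on $V$ given its size, so the inactive paths $I_j$ satisfy \eqref{PIl}--\eqref{I2} with $k$ replaced by $\Sigma_k$; in particular there are at most $\Sigma_k$ paths and $\P\{|I_j|\le2\}=O(\Sigma_k/n)$. An inactive vertex carries a mark with the probability of \eqref{pixi} evaluated at $|\cA^k|=\Sigma_k$, namely $p_1^{(k)}:=\Sigma_kp/(1+(\Sigma_k-1)p)=\Sigma_kp\,(1+o(1))$, and the hypothesis $\Sigma_k<3/(np^2)$ together with $np\to\infty$ gives $p_1^{(k)}=O(1/(np))=o(1)$. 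These are the only changes to the ingredients \eqref{D1}--\eqref{pM2}.

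The core step is to localise every term of the decomposition \eqref{D1} by saturation, so that each is counted against the $\sim T_k$ fresh vertices rather than against all $\sim\Sigma_k$ paths. For the step~2 (marked-chain) contribution, a chain can start only at an endpoint that is marked after the $k$-th exploration phase: by saturation an \emph{old} endpoint must have received its mark during that phase, of probability $O(T_kp)$, and there are at most $2\Sigma_k$ of them; each of the at most $2T_k$ \emph{new} endpoints produced by the newly used vertices may carry any mark, of probability $O(\Sigma_kp)$. Thus the expected number of ignitable endpoints is $O(\Sigma_k\cdot T_kp)+O(T_k\cdot\Sigma_kp)=O(pT_k\Sigma_k)$, the two contributions yielding the constant $4$, and each ignites a marked chain of geometric length with ratio $p_1^{(k)}=o(1)$, hence of conditional mean $1+o(1)$. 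For the step~1/step~3 (single-gap) contribution, $N_1$ now counts only gaps created by a fresh vertex landing at ring-distance two from an active vertex, so $\E N_1=O(T_k\Sigma_k/n)$ rather than $O(\Sigma_k^2/n)$; the same saturation argument reduces the remaining $N_2,N_3$ and $M_j\ge2$ terms of \eqref{ED1}--\eqref{ED3} to the orders $O(T_k\Sigma_k/n)$ and $O(p_1^{(k)}\cdot pT_k\Sigma_k)$. Collecting everything, $\E\{|\cD_k|\mid\{N_l\},\Sigma_k\}\le 4pT_k\Sigma_k(1+o(1))$, where the relative error $O(p_1^{(k)})+O(\Sigma_k/n)$ times the main term lands inside $O(pT_k\Sigma_k/(pn))$, since $pT_k\Sigma_k\cdot(1/(np))=T_k\Sigma_k/n$.

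Finally I would pass from the conditional mean to the $O_{L^1}$ statement exactly as in Lemma~\ref{lem1}. The dominant step~2 count is, conditionally, a sum of independent indicators, so its variance is of the order of its mean $pT_k\Sigma_k$; writing $|\cD_k|\le\E|\cD_k|+O_{L^1}(\sqrt{\Var})$ then produces the fluctuation term $O_{L^1}(\sqrt{pT_k\Sigma_k})$ in the regime $p(T_k\Sigma_k)\to\infty$, alongside the $O_{L^1}(pT_k\Sigma_k/(pn))$ from the single gaps and the $N_2,N_3$ corrections. When $p(T_k\Sigma_k)$ is bounded, nonnegativity of $|\cD_k|$ and the expectation bound give $|\cD_k|=O_{L^1}(pT_k\Sigma_k)$ directly. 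The only genuinely new difficulty relative to Lemma~\ref{lem1} is to make the saturation/localisation rigorous: one must verify that conditioning on the entire history of the earlier phases leaves the marks on the newly exposed vertices and the positions of the fresh active vertices with the independent, uniform structure assumed above, and that the cascade of activations within the $k$-th expansion phase is controlled by the geometric-chain estimate and does not reintroduce the $\Sigma_k^2$ scaling.
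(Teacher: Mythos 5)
Your proposal is correct and follows essentially the same route as the paper: the saturation property (no marked endpoints, no single-vertex gaps surviving the previous expansion) localises new activations to the two boundary contributions, which is exactly the paper's split $|\cD_k|=D_k^1+D_k^2$ into the at most $2T_k$ boundary vertices of $\cA_k(T_k)$ (marked with probability at most $p\sum_{l=1}^kT_l$) and the at most $2\sum_{l=1}^{k-1}T_l$ old boundary vertices (freshly marked with probability at most $pT_k$), each handled by rerunning Lemma \ref{lem1} with the substituted parameters. Your accounting of the main term $4pT_k\sum_{l=1}^kT_l$, the error terms $T_k\sum_{l=1}^kT_l/n$ and $\sqrt{pT_k\sum_{l=1}^kT_l}$, and the bounded-expectation case matches the paper's argument.
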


\begin{proof}[Proof of Lemma \ref{CJ2}]
Assume we are given the sets $\cA_1(T_1), \ldots, \cA_k(T_k)$. 
Recall that after the $k$-th expansion phase, the set of remaining inactive vertices 
forms  intervals on $R_n$ with the following properties: the end 
 points of each interval do not have marks from the sets 
$\cA_1(T_1), \ldots, \cA_{k-1}(T_k-1)$ but may have at most one mark from the set $\cA_k(T_k)$ and the rest of the points of the intervals may have at most one mark from the sets $\cA_1(T_1), \ldots, \cA_{k}(T_k)$. Recall, that a vertex has a mark from a set, if it is connected by a random edge with this set.

Notice, that 
$\cA_{k}(T_k)$ is distributed uniformly on the remaining 
$n-(T_1+ \ldots  +T_{k-1})$ vertices, and  $|\cA_{k}(T_k)|=T_k$. Hence, 
there are at most $2T_k$ vertices on the boundary of $\cA_{k}(T_k)$ denoted $\partial_1 \left( \cA_k(T_k)\right)$
and each of these may have at most one mark with a probability at most
$p\sum_{l=1}^{k} T_l.$ Denote $D_k^1$ the number of the nodes on the outer boundary of $\cA_{k}(T_k)$ which have one mark. 

Furthermore, there are at most $2\sum_{l=1}^{k-1}T_l$ vertices on the boundary of $\cup_{l=1}^{k-1}\cA_{l}(T_l)$, each of which may have at most  one mark (from the set $\cA_k(T_k))$ with a probability at most
$pT_k.$ Denote $D_k^2$ the number of the nodes on the boundary of 
$\cup_{l=1}^{k-1}\cA_{l}(T_l)$
which have one mark. 

In order to get an upper bound for
 $|\cD_k|=D_k^1+D_k^2$,
 we may now almost  repeat the proof of Lemma \ref{lem1} twice to get the bounds for each $D_k^1$ and $D_k^2$ separately: first time we 
 replace $p_1$ (see (\ref{p1})) by $p\sum_{l=1}^kT_l$ 
and $T_1$ by $T_k$, and the second time we 
 replace $p_1$ by $pT_k$  and $T_1$ by $\sum_{l=1}^{k-1} T_l$. This gives us Lemma \ref{CJ2}.
\end{proof}

\subsection{The number of vertices activated in an exploration phase.}

Let us fix $k\geq 1$ arbitrarily. The $k$-th  expansion phase leaves us with
 the set ${\cal A}^k$ of $T_1+ \ldots + T_k $ used active vertices and a set $\cD_k$
of unused active vertices.
We shall consider here only the values
\begin{equation}\label{Ass1}
 T_1+ \ldots +T_k \leq 3t_c=\frac{3}{np^2}.
\end{equation}
(Observe that if (\ref{Ass1}) does not hold then almost percolation happens even  on
 the edges of $G_{n,p}$ only, see \cite{JLTV}).
Also, we shall assume that $  n^{-1} \ll p \ll  n^{-2/3}$, which by the Corollary \ref{Cor1} implies that $|\cD_1|$ is large \whp 

Consider now the $k+1$-st exploration
  phase.
By the definition (\ref{Sbin*})
we have
\begin{align}\label{Sbin1}
 |{\cal A}_{k+1}(t)| & =|\cD_k|+
 S_{k+1}(t)
\\
&  \stackrel{d}{=}|\cD_k|+
 \Bin  \left(n-\sum_{l=1}^kT_l -|{\partial}_1(\cD_k
   \cup {\cal A}^k)|,\pi_{k+1}(t)\right) + \Bin  \left(|{\partial}_1(\cD_k
   \cup {\cal A}^k)|,\pi_{1}(t)\right),
\nonumber
\end{align}
where
  \begin{equation}\label{pi2}
\pi_{k+1}(t)=\pi_{1}(t) +
  \frac{  p\sum_{l=1}^kT_l}{1+(\sum_{l=1}^kT_l-1)p}(1-p)^{t-1}pt=:\pi_{1}(t) +
  \pi_+(t).
\end{equation}
Notice, that under assumption (\ref{Ass1}) we have the following bounds
for all $t=o(1/p)$:
\begin{equation}\label{Fe3}
\pi_{1}(t)= O(p^2t^2)
\end{equation}
and
\begin{equation}\label{Fe4}
\pi_{k+1}(t)=O\left( p^2t^2 + p^2t/(np^2)\right) =O\left( p^2t^2 + 
t/n \right) .
\end{equation}

Given $T_1, \ldots T_k$, $\cD_k$ and set ${\partial}_1(\cD_k
   \cup {\cal A}^k)$
we shall approximate the terms  in (\ref{Sbin1}) separately. 
   Let us define two processes
 \begin{equation*}\label{SP}
   S^{(1)}(t):=\Bin  \left(K_1,\pi_{1}(t)\right), \ \ S^{(2)}(t):=\Bin  \left(K_{2},\pi_{k+1}(t)\right), \ \ 
\end{equation*}
 where
 \begin{equation}\label{K}
K_1:= |{\partial}_1(\cD_k
   \cup {\cal A}^k)|, \  \  \ K_{2}= n-\sum_{l=1}^kT_l -|{\partial}_1(\cD_k
   \cup {\cal A}^k)|.
\end{equation}

 \begin{prop}\label{P1} Given numbers $K_1$ and $K_2$ the processes
\begin{equation*}\label{eq:martin}
\frac{S^{(1)}(t)-\E S^{(1)}(t)}{1-\pi_{1}(t)}, \ \ 
\frac{S^{(2)}(t)-\E S^{(2)}(t)}{1-\pi_{k+1}(t)},
\end{equation*}
$t=0,1, \ldots, $   are martingales.
   \end{prop}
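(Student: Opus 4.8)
The plan is to realize each of the two processes as a monotone counting process and to check that the indicated normalization is exactly its Doob compensator. For each of the $K\in\{K_1,K_2\}$ relevant vertices introduce an activation time $\tau_v=\min\{t\ge0: v \text{ has at least two marks at step } t\}$, where for the interior vertices the initial mark $\xi_v(k)$ is counted. Because the marks are assigned independently across vertices, the times $\tau_v$ are independent, and by construction $\P(\tau_v\le t)=\pi(t)$ with $\pi$ equal to $\pi_1$ on the boundary set $\partial_1(\cD_k\cup\cA^k)$ (of size $K_1$) and to $\pi_{k+1}$ on its complement (of size $K_2$). Thus $S(t)=\#\{v:\tau_v\le t\}\stackrel{d}{=}\Bin(K,\pi(t))$ is nondecreasing. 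I take $\cF_t$ to be the filtration generated by the activation statuses $\{\ind\{\tau_v\le s\}:v,\ s\le t\}$, equivalently by $S^{(1)}$ and $S^{(2)}$; since the two processes are built on disjoint vertex sets it suffices to prove the martingale property for one generic $S(t)\stackrel{d}{=}\Bin(K,\pi(t))$.

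Given $\cF_t$, the inactive vertices are precisely those with $\tau_v>t$. By the independence and common law of the $\tau_v$, conditionally on $\cF_t$ each such vertex activates at step $t+1$ independently of the others with probability $\frac{\pi(t+1)-\pi(t)}{1-\pi(t)}$ (which is well defined since $\pi(t)<1$ in the range considered). Hence
\begin{equation*}
\E[S(t+1)\mid\cF_t]=S(t)+\bigl(K-S(t)\bigr)\frac{\pi(t+1)-\pi(t)}{1-\pi(t)}.
\end{equation*}
Subtracting $K\pi(t+1)$ and simplifying the right-hand side, the coefficient of $S(t)$ becomes $\frac{1-\pi(t+1)}{1-\pi(t)}$ and the constant term combines to $-K\pi(t)\frac{1-\pi(t+1)}{1-\pi(t)}$, so that $\E[S(t+1)-K\pi(t+1)\mid\cF_t]=\frac{1-\pi(t+1)}{1-\pi(t)}\bigl(S(t)-K\pi(t)\bigr)$. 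Dividing by $1-\pi(t+1)$ yields exactly $\E\bigl[\frac{S(t+1)-\E S(t+1)}{1-\pi(t+1)}\mid\cF_t\bigr]=\frac{S(t)-\E S(t)}{1-\pi(t)}$, the martingale identity; integrability is immediate from $0\le S(t)\le K$. The identical computation with $\pi_{k+1}$ in place of $\pi_1$ disposes of the second process.

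The step I expect to be the main obstacle is the precise choice of filtration and the conditional independence it affords. One must resist conditioning on the full configuration of marks: under that finer filtration the one-step activation probabilities would be heterogeneous, since an inactive vertex carrying one mark activates with probability $p$ while one carrying no mark essentially cannot, and no clean drift identity results. The martingale property holds for the coarser status filtration $\cF_t$, where the iid activation-time representation renders the inactive vertices exchangeable with the common, conditionally independent one-step probability $\frac{\pi(t+1)-\pi(t)}{1-\pi(t)}$; pinning down this exchangeability from the independence of the $\tau_v$ is the real content, after which the martingale claim reduces to the one-line algebraic identity above.
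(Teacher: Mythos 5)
Your proof is correct and is essentially the paper's argument: like the authors, you represent $S^{(2)}(t)$ (and $S^{(1)}(t)$) as a sum of independent per-vertex activation indicators with common law $\P(\tau_v\le t)=\pi(t)$ and verify the normalized drift identity with respect to the status filtration, merely carrying out the one-step computation at the aggregate level where the paper checks each per-vertex process $X_v(t)$ and sums. Your explicit remark that the finer mark-level filtration would destroy the martingale property is a correct clarification of a point the paper leaves implicit.
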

   
   \begin{proof}[Proof of Proposition \ref{P1}]
   For the process $S^{(1)}(t)$ the proof is the same as
     for Lemma 7.2 in \cite{JLTV}. It is practically the same for the
     process $S^{(2)}(t)$ as well, which we explain now.
Note that $S^{(2)}(t)$ is a sum of $i.i.d.$ processes so that
\[
S^{(2)}(t)=\sum_{v=1}^{K_2}\ind \{\xi_v+ \sum_{j=1}^t\xi_{jv}\geq 2\},
\]
where $\xi_v, \xi_{jv},$ $v\geq 1$, $j\geq 1$ are independent
Bernoulli random variables, such that $\xi_v \in Be(p_+)$ with
\[p_+:= \frac{\sum_{l=1}^kT_lp}{1+(\sum_{l=1}^kT_l-1)p},\]
and $\xi_{jv} \in Be(p). $ 
Then it is straightforward to check that
\[X_v(t):=   \frac{ \ind \{ \xi_v+ \sum_{j=1}^t\xi_{jv}\geq 2\} -
  \pi_{k+1}(t)  }{1-\pi_{k+1}(t)}
\]
is a martingale, taking also into account that
\[\pi_{k+1}(t)=\P\{\xi_v+ \sum_{j=1}^t\xi_{jv}\geq 2 \}= \P\{X_v(t)=1\}.\]
Then 
\[\frac{S^{(2)}(t)-\E S^{(2)}(t)}{1-\pi_{k+1}(t)}=
\sum_{v=1}^{K_2}X_v(t)  
\]
is also a martingale. 
\end{proof}

Since $K_i\leq n$ for $i=1,2$, we can make use of the properties of  martingales to get immediately the following bounds. 

\begin{cor}(Lemma 7.3 \cite{JLTV})\label{CF} For any $t_0$,
  \[
 \E\left(\sup_{t\leq t_0} |S^{(1)}(t)-\E S^{(1)}(t)| \right)^2 \leq
 4\frac{n \pi_{1}(t_0)}{1-\pi_{1}(t_0)}, \]
\[    \E\left(\sup_{t\leq t_0} |S^{(2)}(t)-\E S^{(2)}(t)| \right)^2 \leq
 4\frac{n \pi_{k+1}(t_0)}{1-\pi_{k+1}(t_0)}.\]
\hfill$\Box$
  \end{cor}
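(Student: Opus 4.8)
The plan is to deduce Corollary \ref{CF} from Proposition \ref{P1} by a single application of Doob's $L^2$ maximal inequality. Write
\[
X^{(1)}(t):=\frac{S^{(1)}(t)-\E S^{(1)}(t)}{1-\pi_{1}(t)},\qquad
X^{(2)}(t):=\frac{S^{(2)}(t)-\E S^{(2)}(t)}{1-\pi_{k+1}(t)},
\]
which are (discrete-time) martingales by Proposition \ref{P1}. The elementary link between these normalized martingales and the un-normalized deviations appearing in the statement is that $\pi_{1}(t),\pi_{k+1}(t)\ge 0$, so the normalizing factors lie in $(0,1]$; hence, for every $t$,
\[
|S^{(1)}(t)-\E S^{(1)}(t)|=(1-\pi_{1}(t))\,|X^{(1)}(t)|\le |X^{(1)}(t)|,
\]
and likewise for the second process. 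Taking the supremum over $t\le t_0$ therefore gives $\sup_{t\le t_0}|S^{(1)}(t)-\E S^{(1)}(t)|\le \sup_{t\le t_0}|X^{(1)}(t)|$.

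First I would square this inequality, take expectations, and apply Doob's $L^2$ inequality to the martingale $X^{(1)}$, obtaining
\[
\E\Bigl(\sup_{t\le t_0}|S^{(1)}(t)-\E S^{(1)}(t)|\Bigr)^2
\le \E\Bigl(\sup_{t\le t_0}|X^{(1)}(t)|\Bigr)^2
\le 4\,\E\bigl(X^{(1)}(t_0)^2\bigr).
\]
Next I would evaluate the right-hand side exactly: since $S^{(1)}(t_0)\stackrel{d}{=}\Bin(K_1,\pi_{1}(t_0))$ has variance $K_1\pi_{1}(t_0)(1-\pi_{1}(t_0))$ and $X^{(1)}(t_0)$ is the centred count divided by the deterministic factor $1-\pi_{1}(t_0)$,
\[
\E\bigl(X^{(1)}(t_0)^2\bigr)=\frac{\Var\bigl(S^{(1)}(t_0)\bigr)}{(1-\pi_{1}(t_0))^2}=\frac{K_1\,\pi_{1}(t_0)}{1-\pi_{1}(t_0)}.
\]
Finally, using $K_1=|\partial_1(\cD_k\cup\cA^k)|\le n$ yields the first bound of the corollary. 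The second bound is obtained verbatim with $X^{(2)}$, $\pi_{k+1}$, $S^{(2)}(t_0)\stackrel{d}{=}\Bin(K_2,\pi_{k+1}(t_0))$, and $K_2\le n$ in place of their counterparts.

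There is no substantive obstacle here: the whole content sits in Proposition \ref{P1}, and what remains is the standard $L^2$ martingale maximal inequality together with the variance of a binomial. The only point meriting a word of care is the passage from the normalized martingales $X^{(i)}$, to which Doob's inequality applies, back to the raw centred counts $S^{(i)}(t)-\E S^{(i)}(t)$ that appear in the statement. This passage is free precisely because $1-\pi_i(t)\le 1$ pointwise, so the raw deviation is dominated by $|X^{(i)}(t)|$ at every time; it is for the same reason that evaluating the variance only at the endpoint $t_0$, combined with the universal constant $4$ from Doob, already delivers the stated estimate.
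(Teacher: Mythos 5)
Your proof is correct and takes essentially the same route as the paper, which states Corollary \ref{CF} without detail by citing Lemma 7.3 of \cite{JLTV}: the intended argument is exactly your combination of the martingale property from Proposition \ref{P1}, Doob's $L^2$ maximal inequality, the binomial variance evaluated at $t_0$ (with the normalizing factor $1-\pi_i(t_0)$ cancelling to leave $K_i\pi_i(t_0)/(1-\pi_i(t_0))$), and the crude bound $K_i\le n$. Your care in passing from the normalized martingales back to the raw centred counts via $1-\pi_i(t)\le 1$ is the only nontrivial point, and you handle it correctly.
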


For all $t\leq t_0\leq o(1/p)$, when in particular, $\pi_i(t)=o(1)$,
the bounds from Corollary \ref{CF} yield the following approximation
\begin{align*}\label{ME4}
  S_{k+1}(t)& =S^{(1)}(t)+S^{(2)}(t) 
  \nonumber
\\
& = \E S^{(1)}(t) +\E S^{(2)}(t)
  +O_{L^2}\left(\sqrt{n  \pi_1(t_0) } +  \sqrt{n\pi_{k+1}(t_0) } \right).
  \end{align*}
Combining this with (\ref{Sbin1}) we obtain for all $t\leq t_0\leq o(1/p)$
\begin{equation}\label{F3}
 |{\cal A}_{k+1}(t)| =|\cD_k|+
\E S^{(1)}(t) +\E S^{(2)}(t)
  +O_{L^2}\left(\sqrt{n\pi_1(t_0)} +\sqrt{n\pi_{k+1}(t_0)} \right).
\end{equation}

We begin with the asymptotics of the number of activated vertices in 
 the second exploration phase. 
 As we will see, under the conditions of Theorem \ref{T2} \ref{T2i}, 
(almost) percolation happens during the second exploration phase. Therefore, we concentrate on this phase and prove Theorem \ref{T2} \ref{T2i}.

\begin{lem}\label{LJ1}
Let  $n^{-1}\ll p\ll n^{-1/2}$. Then given $T_1=O(t_c)$ for all  $t\leq t_0 = O(t_c)$
one has
\[
|{\cal A}_2(t)| =  \left(n-T_1\right)\frac{(tp)^2}{2} +
  \left(n-3T_1\right)p^2tT_1
 + |\cD _1| -n\frac{(tp)^3}{3}(1+o(1))
\]
\[+ O(t/t_c)+o(p(t^2+T_1^2))+O_{L^2}\left( \sqrt{t_0} \right).\]
\end{lem}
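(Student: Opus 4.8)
The plan is to specialize the general one-step decomposition (\ref{F3}) to $k=1$ and then expand each constituent to the order dictated by the three claimed error streams. Setting $k=1$ in (\ref{F3}) gives
\[
|\cA_2(t)| = |\cD_1| + \E S^{(1)}(t) + \E S^{(2)}(t) + O_{L^2}\bigpar{\sqrt{n\pi_1(t_0)} + \sqrt{n\pi_2(t_0)}},
\]
where $\E S^{(1)}(t) = K_1\pi_1(t)$ and $\E S^{(2)}(t) = K_2\pi_2(t)$ with $K_1,K_2$ as in (\ref{K}) and $\pi_2 = \pi_1 + \pi_+$ as in (\ref{pi2}). Since $K_1+K_2 = n-T_1$, I would immediately regroup
\[
\E S^{(1)}(t) + \E S^{(2)}(t) = (n-T_1)\pi_1(t) + K_2\pi_+(t),
\]
isolating a pure $\gnp$ contribution $(n-T_1)\pi_1(t)$ from the mixed contribution $K_2\pi_+(t)$ that carries the dependence on the first phase through $T_1$.

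The first routine step is the Taylor expansion of $\pi_1(t) = 1-(1-p)^t - tp(1-p)^{t-1}$. With $x:=tp=o(1)$ (valid since $t\le t_0 = O(t_c)$ and $p\gg n^{-1}$) one finds $1-e^{-x}(1+x) = \tfrac{x^2}{2} - \tfrac{x^3}{3}+O(x^4)$, so that $\pi_1(t) = \tfrac{(tp)^2}{2} - \tfrac{(tp)^3}{3} + O(tp^2) + O((tp)^4)$. Multiplying by $n-T_1$ produces the term $(n-T_1)\tfrac{(tp)^2}{2}$, the cubic term $-n\tfrac{(tp)^3}{3}(1+o(1))$ (the discarded $T_1(tp)^3$ being a factor $T_1/n=o(1)$ smaller), while the $O(tp^2)$ correction contributes only $O((n-T_1)tp^2)=O(t/t_c)$. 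For the mixed term I would use (\ref{pi2}) together with $p_+ = \tfrac{pT_1}{1+(T_1-1)p} = pT_1(1+o(1))$ and $(1-p)^{t-1}=1+O((t+T_1)p)$ to obtain $\pi_+(t)=p^2tT_1(1+o(1))$.

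The genuinely geometric step, and the one I expect to be the main obstacle, is the estimate of $K_1 = |\partial_1(\cD_1\cup\cA^1)|$. After the first expansion phase the inactive vertices form intervals of length at least two on $R_n$, so $\cA^1\cup\cD_1$ is a union of $m$ arcs separated by those intervals, and each inactive interval (being of length $\ge 2$) contributes exactly its two endpoints to the outer boundary; hence $K_1=2m$. I would then identify $m$ with the number of maximal runs of $\cA_1(T_1)$ diminished by the runs merged during the expansion. The expected number of runs of a uniform $T_1$-subset of the cycle is $T_1(1+o(1))$, and only $O(N_1)=O(T_1^2/n)=o(T_1)$ of them disappear (the length-one intervals and the fully consumed ones), using (\ref{I2}) and the combinatorics of Lemma \ref{lem1}. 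Thus $K_1 = 2T_1(1+o(1))$ and $K_2 = n-3T_1+o(T_1)$, which turns $K_2\pi_+(t)$ into $(n-3T_1)p^2tT_1$ plus an error $o(T_1)\cdot p^2tT_1 = o(p^2tT_1^2) = o(pT_1^2)$ (using $pt\to 0$), absorbed into $o(p(t^2+T_1^2))$.

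It remains to bound the martingale fluctuation. By Corollary \ref{CF} the last term of (\ref{F3}) is $O_{L^2}\bigpar{\sqrt{n\pi_1(t_0)}+\sqrt{n\pi_2(t_0)}}=O_{L^2}(\sqrt{t_0})$, since $n\pi_1(t_0)=O(nt_0^2p^2)=O(t_0)$ and $n\pi_+(t_0)=O(np^2t_0T_1)=O(t_0)$ under $T_1,t_0=O(t_c)$. Collecting the four main contributions $(n-T_1)\tfrac{(tp)^2}{2}$, $(n-3T_1)p^2tT_1$, $|\cD_1|$ and $-n\tfrac{(tp)^3}{3}(1+o(1))$ against the three error streams $O(t/t_c)$, $o(p(t^2+T_1^2))$ and $O_{L^2}(\sqrt{t_0})$ yields the identity. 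The only delicate bookkeeping is to verify term by term, using $n^{-1}\ll p\ll n^{-1/2}$ and $t,T_1=O(t_c)$, that every discarded higher-order piece (the cross terms in the expansion of $\pi_+$, the $T_1(tp)^3$ contribution, and the $o(T_1)$ fluctuation of $K_1$) is dominated by one of these three streams.
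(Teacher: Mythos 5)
Your proposal is correct and follows essentially the same route as the paper's proof: specialize \eqref{F3} to $k=1$, regroup into $(n-T_1)\pi_1(t)+K_2\pi_+(t)$, expand $\pi_1$ and $\pi_+$ exactly as in \eqref{pi1}--\eqref{pi2+}, establish $|\partial_1(\cD_1\cup\cA^1)|=2T_1(1+o(1))$ (the paper does this via the exact representation \eqref{bA1}--\eqref{bA2} in terms of $N_0$, $N_1$ and the fully consumed intervals, which your run-counting argument reproduces, though your $O(N_1)$ bound should formally also include the $O(T_1p_1)=o(T_1)$ fully consumed intervals you mention), and close with the martingale bound of Corollary \ref{CF}. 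All error terms are absorbed into the same three streams as in \eqref{ME6}, so the argument matches the paper's.
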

\noindent
{\bf Proof.}
First we derive from (\ref{F3})
\begin{equation}\label{F3M}
 |{\cal A}_{2}(t)| =
 |\cD _1|+
\left(n-T_1-\E  \left\{ |{\partial}_1(\cD _1
   \cup {\cal A}^1)|
\mid  T_1 \right\} \right) \pi_{2}(t) 
\end{equation}
\[+\E  \{ |
{\partial}_1(\cD _1
   \cup {\cal A}^1)|| T_1\}  \pi_1(t)+O_{L^2}\left(\sqrt{n\pi_1(t_0)} +\sqrt{n\pi_{2}(t_0)} \right).\]
   Consider ${\partial}_1(\cD _1
   \cup {\cal A}^1)$. Since the vertices of $\cD _1$ are connected to the
   boundary of ${\cal A}^1$, we have $|{\partial}_1(\cD _1
   \cup {\cal A}^1)|\leq |{\partial}_1(
   {\cal A}^1)|$. When an entire  interval of  inactive vertices  becomes
   active after the $1$-st expansion phase, the boundary of the active
   set looses exactly 2 vertices if this interval has at least 2 vertices, otherwise, 
it loses 1 vertex. Hence, using again sets $I_i$, $i=1,
   \ldots, T_1$, defined
   in the proof of Lemma \ref{lem1} , we get the following representation
\begin{equation}\label{bA1}
|{\partial}_1(\cD _1
   \cup {\cal A}^1)| = |{\partial}_1(
   {\cal A}^1)| - N_1- 2\sum_{j:|I_j|\geq 2}{\bf I}\{M_j(|I_j|)\geq |I_j|-1\}.
\end{equation}
Since
\[
 |{\partial}_1(
   {\cal A}^1)| = N_1 +  2\sum_{j\geq 1}{\bf I}\{|I_j|\geq 2\}=  N_1 +
   2\left( T_1-  N_1 - N_0 \right),
\]
we derive 
\begin{equation}\label{bA2}
|{\partial}_1(\cD _1
   \cup {\cal A}^1)| = 2T_1
   -2N_1- 2 N_0 - 2\sum_{j:|I_j|\geq 2}{\bf I}\{M_j(|I_j|)\geq l-1\}.
\end{equation}
With the same argument as we derived Lemma \ref{lem1} we get from here
\begin{equation}\label{bA3}
|{\partial}_1(\cD _1
   \cup {\cal A}^1)| = 2T_1(1+o_{L^1}(1)) .
\end{equation}

Using also  approximations
\begin{equation}\label{pi1}
  \pi_1(t)= \P\{\Bin(t,p)\geq 2\}=
\frac{(tp)^2}{2}-\left(t \frac{p^2}{2}+ \frac{(tp)^3}{3} \right)(1+o(1)),
\end{equation}
 and (see (\ref{pi2}) with $k=1$)
 \begin{equation}\label{pi2+} 
 \pi_+(t) = p^2tT_1(1+o(p)(T_1+t))=p^2tT_1 + o(p^3tT_1)(t+T_1),
\end{equation}
we derive from (\ref{F3M})  that 
\[
|{\cal A}_2(t)| = 
|\cD _1|+
\left(n-T_1\right) \pi_{1}(t) +
\left(n-T_1-\E  \left\{ |{\partial}_1(\cD _1
   \cup {\cal A}^1)|
\mid  T_1 \right\} \right) \pi_{+}(t) 
\]
\[
+O_{L^2}\left(\sqrt{n\pi_1(t_0)} +\sqrt{n\pi_{2}(t_0)} \right)\]

\[
=  |\cD _1|+\left(n-T_1\right)
 \left( \frac{(tp)^2}{2}-\left(t \frac{p^2}{2}+ \frac{(tp)^3}{3} \right)(1+o(1))
\right)\]
 \[
+
  \left(n-3T_1\right)\left( 
p^2tT_1 + o(p^3tT_1)(t+T_1)
\right)
+
 o(p T_1(t+T_1))
 +O_{L^2}\left( \sqrt{np^2t_0^2} \right)
\]

\begin{equation}\label{ME6}
=|\cD _1|+ \left(n-T_1\right)\frac{(tp)^2}{2} +
  \left(n-3T_1\right)p^2tT_1 -n\frac{(tp)^3}{3}
\end{equation}
 \[+no(tp)^3+nO(tp^2)+ no(p^3tT_1)(t+T_1)
+
 o(p T_1)(t+T_1)
 +O_{L^2}\left( \sqrt{t_0} \right).
\]
This  yields the statement of the Lemma. \hfill$\Box$

Using the result of  Lemma \ref{LJ1} combined with bound from Lemma 
\ref{LF1} 
consider now function
\[
A_2(t)-t=\left(
   (n-T_1)\frac{p^2}{2}t^2
+ 
 \left( np^2 T_1-1-  3  (T_1 p)^2 
 \right)t + 2pT_1^2\right)-n\frac{(tp)^3}{3}(1+o(1))+{\cal R}_{T_1}(t)\]
\begin{equation}\label{di1}
=: f_{T_1}(t)-n\frac{(tp)^3}{3}(1+o(1))+ {\cal R}_{T_1}(t).
 \end{equation}
where for all  $t\leq t_0 = O(t_c)$
\begin{equation}\label{J13}
{\cal R}_{T_1}(t)= O(t/t_c)+o(p(t^2+T_1^2))+O_{L^1}\left( \sqrt{t_c} + \frac{T_1^2}{n}\right).
\end{equation}
Notice here that when $t\leq T_1=O(1/(np^2))$  we have by (\ref{J13})
\begin{equation}\label{bR}
{\cal R}(t)=
O_{L^1}\left(\frac{1}{\sqrt{np^2}}\right) + o\left( \frac{1}{n^2p^3}\right).
 \end{equation}

We shall study  the minimal value of the introduced above function
\begin{equation}\label{minf}
f_{T_1}(t)=  (n-T_1)\frac{p^2}{2}t^2
+ 
 \left( np^2 T_1-1-  3  (T_1 p)^2 
 \right)t + 2pT_1^2 .
 \end{equation}
 First we observe that the argument of the minimal value of this function is
 \begin{equation}\label{tmin}
t_{min}:=\frac{1- np^2 T_1+  3  (T_1 p)^2 }{(n-T_1) p^2},
 \end{equation}
 and
\begin{equation}\label{cc3}
 \ t_{min}<0 \ \  \Leftrightarrow \ \ np^2 T_1>1+  3  (T_1 p)^2 .
\end{equation}
Then 
\begin{equation}\label{min}
\min_{0\leq t\leq n}f_{T_1}(t) = 
\left\{
\begin{array}{ll}
2pT_1^2 - \frac{\left(np^2 T_1-1- 3  (T_1 p)^2 
 \right)^2 }{(n-T_1) p^2}, & \mbox{ if } np^2 T_1<1+  3  (T_1 p)^2 ,\\ \\ 
2pT_1^2 , & \mbox{ otherwise. }
\end{array}
\right.
 \end{equation}

\subsection{Critical case: proof of Theorem \ref{T2}.}

Let us recall one more result from \cite{JLTV} which describes the critical case of bootstrap percolation on $G_{n,p}$.
\begin{theo*}[Theorem 3.8  \cite{JLTV}]
Suppose that $n\qw\ll p\ll n^{-1/2}$. Let $A^*_0$ be the total number of
vertices activated due to a bootstrap percolation (with threshold
$r=2$) on a random graph $G_{n,p}$ starting with $A_0$ active vertices. 
  
If  $A_0/a_c \to 1$ and also $(A_0-\acx)/\sqrt{\ac}\to -\infty$, then $A^*_0$ is asymptotically normal with the following parameters
\[A^*_0 \in \mbox{AsN}\left(t_*, \frac{t_c}{2(1-A_0/a_c^*)} \right),\]
where $ t_* =t_c + p\tc^2 \etto - \sqrt{2t_c(a_c^*-A_0)}\etto$.
\end{theo*}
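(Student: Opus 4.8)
This is the critical refinement of Theorem~3.1\,(i) of \cite{JLTV} and is proved there; I indicate how it would follow from the exploration machinery set up above. Since here there are no local edges, the process consists of a single exploration phase, so $A^*_0=T_1$ with $T_1=\min\{t\ge0:|\cA_1(t)|=t\}$. Writing
\[ X(t):=|\cA_1(t)|-t=A_0+S_1(t)-t, \qquad S_1(t)\stackrel{d}{=}\Bin(n-A_0,\pi_1(t)) \]
as in \eqref{p2}, the random variable $A^*_0$ is exactly the first time the integer-valued process $X$ hits $0$. The plan is to linearise $X$ around its crossing point and to push a martingale central limit theorem through to the hitting time $T_1$.

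I would first split $X(t)=h(t)+Y(t)$ into its mean trajectory $h(t):=\E X(t)=A_0+(n-A_0)\pi_1(t)-t$ and the centred fluctuation $Y(t):=S_1(t)-\E S_1(t)$; by Proposition~\ref{P1} the normalised process $Y(t)/(1-\pi_1(t))$ is a martingale, and Corollary~\ref{CF} bounds its running maximum by $O_{L^2}(\sqrt{n\pi_1(t_0)})=O_{L^2}(\sqrt{\ac})$ on the relevant range. Using the expansion \eqref{pi1}, $h$ is a perturbed upward parabola whose minimum lies near $t=\tc$ with value $A_0-\acx+o(\sqrt{\ac})$; the \emph{precise} constant $\acx$ (rather than $\ac$) enters here because $\acx$ is defined through the normalised minimum $-\min_t\bigl(n\tpi(t)-t\bigr)/\bigl(1-\tpi(t)\bigr)$, and the factor $1-\tpi(t)$ is exactly the martingale normalisation above. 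The hypothesis $(A_0-\acx)/\sqrt{\ac}\to-\infty$ says that this minimum sits below $0$ by an amount $\gg\sqrt{\ac}$, i.e.\ well beyond the fluctuation scale, so $h$ crosses $0$ cleanly on its descending branch at a point $\tx$ with $h(\tx)=0$; solving this and retaining the correction terms of \eqref{pi1} yields $\tx=\tc+p\tc^2\etto-\sqrt{2\tc(\acx-A_0)}\etto$, the stated centering.

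The next step is the linearisation. Near $\tx$ one has $X(t)\approx h'(\tx)(t-\tx)+Y(\tx)$ with slope $h'(\tx)\approx np^2\tx-1\approx-\sqrt{1-A_0/\acx}$, whence $T_1-\tx\approx -Y(\tx)/h'(\tx)$. Since $\Var Y(\tx)=(n-A_0)\pi_1(\tx)\bigl(1-\pi_1(\tx)\bigr)=\ac\etto=\tfrac12\tc\etto$ and $h'(\tx)^2\approx 1-A_0/\acx$, the heuristic gives
\[ \Var T_1\approx\frac{\Var Y(\tx)}{h'(\tx)^2}\approx\frac{\tc/2}{1-A_0/\acx}=\frac{\tc}{2\bigl(1-A_0/\acx\bigr)}, \]
the claimed variance; a martingale CLT for $Y(\tx)$ (checking the conditional Lindeberg condition and convergence of the quadratic variation) then upgrades this to the asserted asymptotic normality.

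The hard part is to make the linearisation rigorous, that is, to show that $T_1$ is governed by the local behaviour of $X$ at $\tx$ and not by global excursions of the martingale. Concretely one must (a) use Corollary~\ref{CF} to rule out a premature crossing on $[0,\tx-\delta]$, where $h$ is bounded away from $0$ by $\gg\sqrt{\ac}$; and (b) show that across a window of width $w\sim\sqrt{\Var T_1}$ around $\tx$ the curvature is negligible, $h''(\tx)\,w\ll|h'(\tx)|$, so the barrier is effectively a line on the scale of the Gaussian fluctuation. Since $h''(\tx)\approx np^2=1/\tc$ and $|h'(\tx)|\approx\sqrt{1-A_0/\acx}$, requirement (b) reduces exactly to $(\acx-A_0)/\sqrt{\ac}\to\infty$, so the hypothesis of the theorem is precisely what guarantees a non-degenerate Gaussian limit; if instead $(A_0-\acx)/\sqrt{\ac}\to y$ were finite, the slope would degenerate and the limit would cease to be normal, matching the $\Phi(y)$ dichotomy of Theorem~3.6 of \cite{JLTV}.
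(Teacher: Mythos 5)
This statement is quoted verbatim from Theorem 3.8 of \cite{JLTV} and is not proved in the present paper, so the only benchmark is the proof in that cited source; your sketch reconstructs its strategy faithfully --- writing $A^*_0$ as the hitting time of $|\cA_1(t)|-t$ with $S_1(t)\stackrel{d}{=}\Bin(n-A_0,\pi_1(t))$, splitting off the mean trajectory from the martingale fluctuation controlled by the maximal inequality of Corollary \ref{CF}, locating the crossing $\tx$ from the expansion of $\pi_1$, ruling out premature crossings where the drift exceeds the $\sqrt{\ac}$ fluctuation scale, and transferring a CLT at $\tx$ to the hitting time by linearisation --- and your parameter calculus checks out exactly (variance $\ac/(1-A_0/\acx)=\tc/\bigl(2(1-A_0/\acx)\bigr)$, slope $h'(\tx)\approx-\sqrt{1-A_0/\acx}$, and the curvature condition reducing precisely to $(\acx-A_0)/\sqrt{\ac}\to\infty$). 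One cosmetic quibble: the denominator $1-\tpi(t)$ in the definition of $\acx$ arises definitionally from solving $A_0+(n-A_0)\tpi(t)=t$ for the critical $A_0$ (factoring the exact mean), rather than from the martingale normalisation, though the two factors happen to coincide in form.
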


Assume now in the conditions of Theorem \ref{T2}
that    for some $\omega(n) \rightarrow \infty $ but such that $\omega(n)=o(\ac)$
we have
\begin{equation}\label{Ar1}
\frac{\acx-A_0}{\omega(n)\sqrt{\ac}} \to 1.
\end{equation}
This implies by the cited above 
Theorem 3.8  \cite{JLTV}
that 
\[
  T_1= \frac{1}{np^2} + O\left( p\frac{1}{(np^2)^2} \right)+ 
O\left( \sqrt{\frac{\acx- A_0}{np^2}} \right) + 
O_P\left( \sqrt{ \frac{1}{ np^2 (1-A_0/\acx)}  }\right) 
\]
\begin{equation}\label{J2}
=  \frac{1}{np^2} + 
O_P\left(  \frac{1}{n^2p^3} + \left( \frac{1 }{np^2}\right)^{3/4}  \sqrt{ \omega(n) }
 \right).
\end{equation}
Substituting this into  (\ref{min})  we derive             
\begin{equation}\label{min1}
\min_{0\leq t\leq n}f_{T_1}(t) = 2pT_1^2 + O_P\left( \frac{1}{n^3p^4} +
\frac{\omega(n) }{\sqrt{ n}p} 
 \right).
\end{equation}
Substituting now (\ref{min1}) and (\ref{bR}) into (\ref{di1}) ,  we get
 for all $t \leq c \frac{1}{np^2} $, where $1<c<6^{1/3}$
 $$A_2(t)-t \geq  2\frac{1}{n^2p^3}  (1+o_P(1))
+ O_P\left( 
\frac{\omega(n) }{\sqrt{ n}p} 
 \right) -n\frac{(tp)^3}{3} (1+o_P(1)) + o_P\left(\frac{1}{n^2p^3} \right)
$$
\begin{equation}\label{Al}
\geq 
\frac{(6-c^3)}{3n^2p^3}(1+o_P(1)) + O_P\left( 
\frac{\omega(n) }{\sqrt{ n}p} 
 \right).
\end{equation}
Hence, only if $p=o\left(n^{-3/4}\right)$  we can choose $\omega(n)=o\left( 
{pn^{3/4}} 
 \right)^{-2} $ so that $\omega(n) \rightarrow \infty$. This choice
 by 
(\ref{Al})
will give us
$A_2(t)-t \gg 0 $ for all $t \leq c \frac{1}{np^2} $.

We conclude that in the 2-nd exploration phase the process accumulates \whp
\ at least $ct_c=c \frac{1}{np^2} $, $c>1$, active vertices and  passes value $t_c$, which is
 the critical value  for the $G_{n,p}$. From this state on the process 
$A_2(t)$ evolves to the state $n-o(n)$ by Lemma 8.2 from \cite{JLTV} (More precisely, Lemma 8.2 \cite{JLTV} is proved under assumption that 
the process accumulates \whp  \ $3t_c$ active vertices. However, the proof is easy to modify in order to replace  $3t_c$  by  $ct_c$ for any $c>1$). This proves statement $(i)$ of Theorem \ref{T2}.

Assume now that $pn^{2/3}\rightarrow \infty$. 
The statement \ref{T2ii} of Theorem \ref{T2} follows by the assertion 
\ref{Cor1ii} of Corollary \ref{Cor1} as we explained in Remark \ref{rem:theo2.2ii}.
\hfill$\Box$

\subsection{Proof of Theorem \ref{T1}.}

\subsubsection{Subcritical case}
\begin{lem}\label{LF1}
Let   $n^{-1}\ll p\ll n^{-1/2}$, 
and let $k>1$ be fixed arbitrarily.
Under assumption that $T_1, \ldots , T_k$ are given such that
 $\sum_{l=1}^kT_l <\beta t_c$ for some $\beta<1$ we have the following.

(i)  If $pT_k \sum_{l=1}^k T_l =O(1)$, then $T_{k+1} = O_{L^1}(1)$;

(ii) otherwise, if $pT_k \sum_{l=1}^k T_l \to \infty$, then 
\begin{equation}\label{F5}
T_{k+1}\leq 
 \frac{10}{1-\beta} p T_k \sum_{l=1}^k T_l 
\end{equation}
with probability at least 
\begin{equation}\label{F23}
1- O\left(
 \frac{1}{np}
+ \left( p  T_k \sum_{l=1}^kT_l \right) ^{-1} 
\right).
\end{equation}
\end{lem}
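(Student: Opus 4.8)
The plan is to recast the stopping time $T_{k+1}$ as the first passage to zero of the function $h(t):=|\cA_{k+1}(t)|-t$, and to exhibit a deterministic time $t^*:=\frac{10}{1-\beta}\,pT_k\sum_{l=1}^kT_l$ at which, with the claimed probability, $h(t^*)\le 0$. Since $h(0)=|\cD_k|>0$ while $|\cA_{k+1}(t)|$ is nondecreasing and $t$ grows by one per step, $h$ decreases by at most one at each step, so the event $\{h(t^*)\le 0\}$ forces a zero of $h$ at or before $t^*$, i.e.\ $T_{k+1}\le t^*$. Hence $\P(T_{k+1}>t^*)\le\P(h(t^*)>0)$ for the free process, and everything reduces to an upper‑tail bound on $|\cA_{k+1}(t^*)|$. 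The representation (\ref{Sbin1}) gives $|\cA_{k+1}(t)|=|\cD_k|+S_{k+1}(t)$ with $S_{k+1}(t)=S^{(1)}(t)+S^{(2)}(t)$ a sum of independent Bernoulli variables once we condition on the history through the $k$-th expansion phase. Write $D:=pT_k\sum_{l=1}^kT_l$ for brevity.

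For the mean, condition on $T_1,\dots,T_k$, on $\cD_k$, and on the boundary, so that $\E[S_{k+1}(t)\mid\mathrm{hist}]=K_1\pi_1(t)+K_2\pi_{k+1}(t)\le n\pi_{k+1}(t)=n\pi_1(t)+n\pi_+(t)$. From (\ref{pi2}) and $\sum_{l=1}^kT_l<\beta t_c=\beta/(np^2)$ one gets $n\pi_+(t)\le np^2t\sum_{l=1}^kT_l\le\beta t$, while by (\ref{Fe3}) the quadratic term satisfies $n\pi_1(t^*)=O(np^2(t^*)^2)=o(D)$; this last step uses $t^*\ll t_c$, equivalently $np^3T_k\sum_lT_l=O(1/(np))\to0$, which is exactly the statement that the process cannot reach the $G_{n,p}$ take-off value $t_c$ during this phase. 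Feeding in $\E|\cD_k|\le 4D+o(D)$ from Lemma \ref{CJ2} (case $D\to\infty$), I obtain $\E[h(t^*)\mid\mathrm{hist}]\le |\cD_k|-(1-\beta)t^*+o(D)=4D-10D+o(D)=-6D+o(D)$, so the conditional mean of $h(t^*)$ is negative by a margin of order $D$ (this is where the constant $10$ buys comfortable room).

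The concentration step converts this margin into the probability bound. By Corollary \ref{CF}, $\E(\sup_{t\le t^*}|S_{k+1}(t)-\E S_{k+1}(t)|)^2=O(n\pi_{k+1}(t^*))=O(D)$, using $n\pi_+(t^*)\le\frac{10\beta}{1-\beta}D=O(D)$ (from $np^2\sum_lT_l<\beta$ and $t^*=\Theta(D)$) together with the negligibility of $n\pi_1(t^*)$. For $|\cD_k|$ I re-run the second-moment computation of Lemma \ref{lem1} under the substitutions used in Lemma \ref{CJ2} (replacing $p_1$ by $p\sum_lT_l$ and by $pT_k$, and $T_1$ by $T_k$ and by $\sum_{l=1}^{k-1}T_l$), which yields $\Var(|\cD_k|)=O(D)$. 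Hence $\Var(h(t^*))=O(D)$, and Chebyshev's inequality gives $\P(h(t^*)>0)\le\P(h(t^*)-\E h(t^*)>6D-o(D))=\frac{O(D)}{\Theta(D^2)}=O(1/D)=O((pT_k\sum_lT_l)^{-1})$. The residual deterministic approximation errors (the $o(D)$ terms, the boundary estimate $K_1\le 2\sum_lT_l$, and the discrepancy between $K_2=n-\sum_lT_l-K_1$ and $n$) are of relative size $O(1/(np))$, which accounts for the first term in (\ref{F23}) and establishes part (ii). Part (i), when $D=O(1)$, follows more directly: the exploration then starts from $|\cD_k|=O_{L^1}(1)$ seeds, and since each explored vertex creates on average at most $np^2\sum_lT_l+o(1)\le\beta+o(1)<1$ further active vertices while the quadratic term is still negligible, $T_{k+1}$ is dominated by the total progeny of a subcritical branching process, giving $\E T_{k+1}=O(\E|\cD_k|/(1-\beta))=O_{L^1}(1)$.

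The main obstacle is the concentration step. The clean $O(1/D)$ failure probability in (\ref{F23}) appears only if both random contributions to $|\cA_{k+1}(t^*)|$, namely $|\cD_k|$ and $S_{k+1}(t^*)$, have variance of order $D$, so that the squared margin $\Theta(D^2)$ dominates; obtaining $\Var(|\cD_k|)=O(D)$ requires transporting the variance bound of Lemma \ref{lem1} to the $k$-th phase and handling the dependence between $\cD_k$ and the fresh randomness of the $(k+1)$-st exploration through the conditioning. A secondary point one must not neglect is verifying \emph{throughout} $[0,t^*]$ that $n\pi_1(t)$ stays subdominant, i.e.\ that the process genuinely remains subcritical and never reaches $t_c$; this is precisely where the hypothesis $p\ll n^{-1/2}$ (hence $np^2\ll1$) is used.
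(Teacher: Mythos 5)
Your proposal is correct and follows essentially the paper's own route: the same representation \eqref{Sbin1}/\eqref{F3} for $|\cA_{k+1}(t)|$, the same mean estimates $n\pi_+(t)\le \beta t$ and $n\pi_1(t)=O(np^2t^2)$ negligible below $t_c$, Lemma \ref{CJ2} to bound $|\cD_k|$ by $4pT_k\sum_l T_l$ plus lower-order terms, the martingale bounds of Corollary \ref{CF}, and the same Markov/Chebyshev split producing precisely the two error terms $O(1/(np))$ and $O\bigl((pT_k\sum_l T_l)^{-1}\bigr)$ in \eqref{F23}; evaluating $|\cA_{k+1}(t)|-t$ at the single time $t^*=\frac{10}{1-\beta}pT_k\sum_l T_l$ instead of on the paper's window $\bigl[\frac{9}{1-\beta}pT_k\sum_l T_l,\frac{10}{1-\beta}pT_k\sum_l T_l\bigr]$ is a cosmetic difference. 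The only substantive deviations are minor and harmless: in part (i) you replace the paper's direct quadratic estimate (with $t_0=O(1)$) by a subcritical branching-process domination of equivalent content and rigor, and your blanket claim $\Var(|\cD_k|)=O(D)$ is slightly too strong when $p$ is near $1/n$, though the excess fluctuation there is of relative size $O(1/(np))$ and is absorbed by the first term of \eqref{F23}, exactly as in the paper's treatment.
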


\begin{proof}[Proof of Lemma \ref{LF1}]
First we derive from (\ref{F3}), taking into account (\ref{pi2})  that for all $t\leq t_0=o(1/p)$
\begin{equation}\label{F7}
 A_{k+1}(t)=|{\cal A}_{k+1}(t)| \leq |\cD _k|+
n(\pi_{1}(t) +  \pi_+(t)) +
O_{L^2}\left(\sqrt{n\pi_1(t_0)} +\sqrt{n\pi_{k+1}(t_0)} \right). 
\end{equation}
This together with (\ref{pi1}) and (\ref{pi2})  gives us  for all $t\leq t_0=o(1/p)$
\begin{equation}\label{F6}
  A_{k+1}(t)\leq |\cD _k|+
n\frac{(tp)^2}{2} +  tnp^2\sum_{l=1}^kT_l +
O_{L^2}\left(p\sqrt{ n  t_0^2  +nt_0\sum_{l=1}^kT_l } \right). 
\end{equation}

Assume first  that   $pT_k \sum_{l=1}^kT_l =O(1)$, which  by  Lemma \ref{F1}
yields $|\cD _k|=O_{L^1}(1) $. Then by (\ref{F6}) 
and under the assumption $\sum_{l=1}^kT_l<\beta t_c$
we have for all $t\leq t_0 \leq O(t_c) =o(1/p)$
\begin{equation}\label{F9}
  A_{k+1}(t)-t \leq 
n\frac{(tp)^2}{2} -  t\left(1- \beta \right) +
O_{L^1}\left( 1+  \sqrt{t_0} \right) . 
\end{equation}
Now it is straightforward to compute (solving the quadratic equation)
 that (at least)  for all 
\begin{equation}\label{F14}
0\leq t \leq  \frac{1-\beta}{2np^2}
\end{equation}
we have in (\ref{F9})
\begin{equation}\label{F10}
  A_{k+1}(t  )-t \leq -\frac{1-\beta}{2}t    +
O_{L^1}\left(1+\sqrt{ t_0} \right) . 
\end{equation}
Hence, choosing here $t_0=O(1)$ (we do not use  (\ref{F7})
for $t\gg 1$), will  imply that $A_{k+1}(t )< t $ for some
\begin{equation}\label{F11}
  t=T_{k+1}: = O_{L^1}\left(1 \right) ,
\end{equation}
which, notice, also satisfies (\ref{F14}). This yields statement $(i)$
of the Lemma.

When  $pT_k \sum_{l=1}^kT_l \to \infty$ and $\sum_{l=1}^kT_l<\beta t_c$ 
we shall use  Lemma  \ref{F1}   to  derive from (\ref{F6})
for all $t\leq t_0=O(t_c)=o(1/p)$
  \[
A_{k+1}(t)-t 
\leq   n\frac{(tp)^2}{2} -(1 - \beta) t
+ 4p T_k  \sum_{l=1}^k T_l  
 +   O_{L^1}
\left( \frac{1}{n}T_k\sum_{l=1}^kT_l \right) + O_{L^2}
\left( 
p\sqrt{ n  t_0^2  +nt_0\sum_{l=1}^kT_l } \right) 
\]
\begin{equation} \label{F13}
 = n\frac{(tp)^2}{2} -(1 - \beta) t
+ 4p T_k  \sum_{l=1}^k T_l  + 
 O_{L^1}\left(  \frac{pT_k\sum_{l=1}^kT_l }{np}\right) + O_{L^2}
\left( \sqrt{ t_0}
\right).
\end{equation}
Then 
we derive solving the quadratic equation, that (at least)  for all
\begin{equation}\label{F18}
\frac{9}{1-\beta} p T_k \sum_{l=1}^kT_l \leq  t \leq   t_0 \leq 
 \frac{1-\beta}{np^2} 
\end{equation}
we have
\begin{equation}\label{F8}
 n\frac{(tp)^2}{2} -(1 -  \beta) t
+ 4p T_k  \sum_{l=1}^k T_l  <  - \frac{1}{2}p T_k  \sum_{l=1}^k T_l .
 \end{equation}
Therefore by (\ref{F13}) with $t_0=\frac{10}{1-\beta} p T_k \sum_{l=1}^kT_l$
for all $t$
which satisfy  (\ref{F18}), i.e., 
\[\frac{9}{1-\beta} p T_k \sum_{l=1}^kT_l \leq  t \leq  \frac{10}{1-\beta} p T_k \sum_{l=1}^kT_l,\]
it holds that
\[
A_{k+1}(t)-t \leq  -  \frac{1}{2}pT_k  \sum_{l=1}^k T_l + 
 O_{L^1}\left(  \frac{pT_k\sum_{l=1}^kT_l }{np}\right)
+ O_{L^2}\left(
\sqrt{ p T_k \sum_{l=1}^kT_l}
\right).
\]
Hence, if the right-hand side of the last formula is negative,
 the $k+1$-st exploration phase will stop at $T_{k+1} \leq 
\frac{10}{1-\beta} p T_k \sum_{l=1}^kT_l$, and thus
\[
\P 
\left\{ 
T_{k+1} \leq \frac{10}{1-\beta} p T_k \sum_{l=1}^kT_l
\mid T_k \sum_{l=1}^kT_l
\right\}
\]
\[ \geq 
\P \left\{
\min_{t\leq \frac{10}{1-\beta} p T_k \sum_{l=1}^kT_l} A_{k+1}(t)-t \leq 0 
\mid T_k \sum_{l=1}^kT_l
\right\}\]
\[ \geq 
\P \left\{ -  \frac{1}{2}pT_k  \sum_{l=1}^k T_l + 
 O_{L^1}\left(  \frac{pT_k\sum_{l=1}^kT_l }{np}\right)
+ O_{L^2}\left(
\sqrt{ p T_k \sum_{l=1}^kT_l}
\right)\leq 0 
\mid T_k \sum_{l=1}^kT_l
\right\}.\]
Using 
the Chebyshev's inequality together with the assumption that $\sum_{l=1}^k T_l \leq \beta t_c$ we derive from here
\[
\P 
\left\{ 
T_{k+1} \leq \frac{10}{1-\beta} p T_k \sum_{l=1}^kT_l
\mid T_k \sum_{l=1}^kT_l
\right\}
\]
\[ \geq 
1- \P \left\{ O_{L^1}\left(  \frac{pT_k\sum_{l=1}^kT_l }{np}\right)
> \frac{1}{4} p T_k \sum_{l=1}^kT_l
\mid T_k \sum_{l=1}^kT_l
\right\}
\]
\[
-\P \left\{ O_{L^2}\left( \sqrt{ p T_k \sum_{l=1}^kT_l} 
\right)
> \frac{1}{4} p T_k \sum_{l=1}^kT_l
\mid T_k \sum_{l=1}^kT_l
\right\} 
\]
\[ = 1-O\left(\frac{1}{np}+
\left( p  T_k \sum_{l=1}^kT_l \right) ^{-1} 
\right).
\]
This yields statement $(ii)$  of the Lemma \ref{LF1} and finishes the proof. 
\end{proof}

Consider now the relation (\ref{F5}). First we study
 a similar deterministic system.

\begin{lem}\label{LF2}
For given   $c >0$ and $t_1>0$ such that $ct_1<1$ define for $k\geq 1$
\begin{equation}\label{F21}
t_{k+1}= c
 t_k  \sum_{l=1}^k t_l .
\end{equation}
Then for any  $0<\alpha <1$ which satisfies
\begin{equation}\label{F22}
(1-\alpha)\alpha> ct_1,
\end{equation}
one has $t_k \leq \alpha^{k-1} t_1$, $k\geq 1,$ and, hence, 
\[ \sum_{l=1}^{\infty} t_l  \leq t_1\frac{1}{1-\alpha}.\]
\end{lem}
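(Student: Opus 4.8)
The plan is to establish the pointwise bound $t_k \le \alpha^{k-1} t_1$ by induction on $k\ge 1$, after which the summability claim follows at once by comparison with a geometric series. The base case $k=1$ is immediate, since it asserts only $t_1 \le t_1$.

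For the inductive step I would assume $t_l \le \alpha^{l-1}t_1$ for every $1\le l \le k$ and feed this into the defining recursion \eqref{F21}. Bounding both the factor $t_k$ and each term of the sum $\sum_{l=1}^k t_l$ by the inductive hypothesis gives
\[
t_{k+1} = c\,t_k\sum_{l=1}^k t_l \le c\,\alpha^{k-1}t_1\sum_{l=1}^k \alpha^{l-1}t_1 \le \frac{c\,t_1^2}{1-\alpha}\,\alpha^{k-1},
\]
where in the last step I used $\sum_{l=1}^k\alpha^{l-1}\le (1-\alpha)^{-1}$, valid because $0<\alpha<1$. To close the induction it then suffices that the prefactor $c\,t_1/(1-\alpha)$ not exceed $\alpha$, i.e.\ that $(1-\alpha)\alpha \ge c t_1$; this is exactly the hypothesis \eqref{F22}, which yields $t_{k+1}\le \alpha^k t_1$ as required.

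The single point worth isolating --- and the reason there is no real obstacle here --- is that condition \eqref{F22} is tailored precisely so that the geometric decay is self-sustaining: it guarantees that the constant multiplying $\alpha^{k-1}$ in the estimate above stays below $\alpha t_1$, so the decay rate $\alpha$ is propagated from one step to the next. Once the bound $t_k\le \alpha^{k-1}t_1$ holds for all $k$, summing the geometric series gives
\[
\sum_{l=1}^{\infty} t_l \le t_1 \sum_{l=1}^{\infty} \alpha^{l-1} = \frac{t_1}{1-\alpha},
\]
which is the final assertion of the lemma.
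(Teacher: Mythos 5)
Your proof is correct and follows essentially the same route as the paper: both arguments rest on the same self-sustaining geometric estimate, namely that the partial sums are bounded by $t_1/(1-\alpha)$ and that \eqref{F22} then propagates the bound $t_{k+1}\le \alpha^{k} t_1$ to the next step. The only difference is presentational --- the paper packages the induction as a contradiction argument via a maximal index $L$ with $c\sum_{l=1}^{L}t_l\le\alpha$, whereas you run the induction directly --- but the underlying computation is identical.
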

\begin{proof} [Proof of Lemma \ref{LF2}]
Write here $S_k:=\sum_{l=1}^k t_l$.
We shall show first that under condition (\ref{F22})
one has $c S_k < \alpha$ for all $k\geq 1$. 

Assume, on the contrary, that
\begin{equation}\label{F23*}
L=L(\alpha):=\max\{k: c S_k \leq  \alpha\}<\infty.
\end{equation}
By the definition (\ref{F21})
\[S_{L+1}=\sum_{l=1}^{l=L+1} t_l = t_1+ \sum_{k=1}^{L} c
 t_k  S_k, \]
where by the definition of $L$  for all $k\leq L$
\begin{equation}\label{M8}
t_{k+1}=c
 t_k  S_k \leq \alpha t_k \leq \alpha^k t_1.
\end{equation}
Hence, 
\[S_{L+1} \leq t_1+ \sum_{l=1}^{L} 
 \alpha^k  t_1 \leq  t_1\frac{1}{1-\alpha}, \]
which under condition (\ref{F22}) yields 
$cS_{L+1} < \alpha$ and thus  contradicts (\ref{F23*}). Therefore 
$c S_k \leq  \alpha$ for all $k\geq 1$. This yields (\ref{M8}) 
for all $k\geq 1$, and the statement of  the Lemma follows.
\end{proof}

We shall prove now statement \ref{T1i}  of Theorem \ref{T1}. 

Assume, that $T_1= \beta t_c$ for some $\beta <1$. Hence, $pT_1^2=\beta^2 \frac{1}{n^2p^3}$.
Consider then two cases. If $\frac{1}{n^2p^3} =O(1)$, then  by Lemma \ref{LF1}
we have $T_2=O_{L^1}(1)$, which by Lemma \ref{CJ2} implies that $E|\cD _2|
=o(1)$. Hence, \whp{} the bootstrap percolation stops after the second expansion phase.  

Assume now  that 
$\frac{1}{n^2p^3} \rightarrow \infty$, i.e., 
\begin{equation}\label{M1}
\frac{1}{n}\ll p \ll \frac{1}{n^{2/3}}.
\end{equation}
Let $h=h(n)=o(np)$ 
be an arbitrarily fixed function such that $h(n)\rightarrow \infty$. 
Notice that under assumption (\ref{M1}) condition $h(n)=o(np)$ yields
\begin{equation}\label{M14}
h(n)=o(t_c).
\end{equation}
Define
a random  time
\[\tau=\min\{k\geq 1: p  T_k \sum_{l=1}^kT_l < h\}.\]

If $pT_1^2=\beta^2 \frac{1}{n^2p^3} \rightarrow \infty $, but $pT_1^2<h$,
then by Lemma \ref{LF1} (ii) we have $T_2=O(pT_1^2)=O(h)$ \whp. Hence, \whp
\[pT_2(T_1+T_2)=O(p/(np^2))=o(1).\]
This by Lemma \ref{CJ2} implies  that  $|\cD _2|
=o(1)$ \whp, which \whp{} yields a termination of the bootstrap percolation after the second expansion phase.  

Assume, that $\beta^2 \frac{1}{n^2p^3}\geq h$, and therefore $\tau >1$. We shall get first an upper bound in probability for $\tau$.

\begin{prop}\label{Pfi}
Assume that $\beta^2 \frac{1}{n^2p^3}\geq h$ and $(np)^{np}\gg n$.
One can choose an unbounded function $h$ so that $h=o(np)$, and
for some $K_0=o(h)$
\[{\P} 
\left\{ \tau \leq  K_0+1 \right\}\geq 1-K_0 O(h^{-1}).\]
\end{prop}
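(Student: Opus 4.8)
\emph{Setup.} I would write $S_k:=\sum_{l=1}^kT_l$ and $q_k:=pT_kS_k$, so that $\tau=\min\{k\ge1:q_k<h\}$ and the hypothesis $\beta^2/(n^2p^3)\ge h$ is exactly $q_1=pT_1^2\ge h$, i.e.\ $\tau>1$. I would fix an auxiliary $\beta'\in(\beta,1)$ and set $c:=10/(1-\beta')$. The plan is to show that, as long as $q_k\ge h$, part (ii) of Lemma \ref{LF1} forces $q_k$ to contract by a factor $\gamma\to0$ at each step, so $q_k$ drops below $h$ after only $K_0\approx\log n/\log(np)$ steps; the hypothesis $(np)^{np}\gg n$ will be used precisely to leave room for an unbounded $h=o(np)$ with $K_0=o(h)$.

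\emph{Contraction step.} For $k<\tau$ we have $q_k\ge h\to\infty$, hence $pT_kS_k\to\infty$, so whenever $S_k<\beta't_c$ Lemma \ref{LF1}(ii) gives, conditionally on the history $\cF_k$ of the first $k$ phases,
\[
T_{k+1}\le c\,q_k\quad\text{with probability}\quad 1-O\!\Big(\tfrac1{np}+\tfrac1{q_k}\Big)=1-O(h^{-1}),
\]
the last step using $q_k\ge h$ together with $h=o(np)$. I would let $\cG$ be the event that this bound holds for every $k\le K_0$ on which $q_k\ge h$ and $S_k<\beta't_c$. On $\cG$ I would check by induction that the constraint $S_k<\beta't_c$ persists: the increments satisfy $T_{l+1}\le cq_l$, and since $cq_1=c\beta^2(pt_c)t_c=o(t_c)$ because $pt_c=1/(np)\to0$, a geometric summation yields $S_k\le\beta t_c+cq_1/(1-\gamma)=\beta t_c+o(t_c)<\beta't_c$. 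It then follows, still on $\cG$ and for $k<\tau$, that
\[
q_{k+1}=pT_{k+1}S_{k+1}\le c\,p\,S_{k+1}\,q_k\le c\beta'(pt_c)\,q_k=:\gamma\,q_k,\qquad\gamma=\frac{c\beta'}{np}\to0 .
\]

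\emph{Counting steps and choosing $h$.} Iterating gives $q_k\le\gamma^{\,k-1}q_1$ on $\cG$, so $q_{K_0+1}<h$ once $K_0>\log(q_1/h)/\log(1/\gamma)$. Since $\log(1/\gamma)=\log(np)(1+o(1))$ and $q_1=\beta^2 n/(np)^3$ gives $\log q_1=\log n-3\log(np)+O(1)$, I would take
\[
K_0:=\Big\lceil\frac{\log(q_1/h)}{\log(1/\gamma)}\Big\rceil=\frac{\log n}{\log(np)}\,(1+o(1)).
\]
I would then choose $h:=\big(np\cdot\tfrac{\log n}{\log(np)}\big)^{1/2}$. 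As $p\ll n^{-1/2}$ forces $\log(np)<\tfrac12\log n$, we get $\log n/\log(np)>2$ and hence $h\ge(np)^{1/2}\to\infty$; moreover both
\[
\frac{h}{np}=\Big(\frac{\log n}{np\,\log(np)}\Big)^{1/2}\quad\text{and}\quad\frac{K_0}{h}=\Big(\frac{\log n}{np\,\log(np)}\Big)^{1/2}(1+o(1))
\]
tend to $0$, which is exactly the content of $(np)^{np}\gg n$. Thus $h\to\infty$, $h=o(np)$ and $K_0=o(h)$, as required.

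\emph{Conclusion and the main difficulty.} On $\cG$, either $\tau\le K_0+1$ already, or $q_k\ge h$ for all $k\le K_0$ and the contraction gives $q_{K_0+1}\le\gamma^{K_0}q_1<h$; in both cases $\tau\le K_0+1$. A union bound run through the filtration then yields
\[
\P\{\tau>K_0+1\}\le\P(\cG^{c})\le\sum_{k=1}^{K_0}\E\Big[\ind\{q_k\ge h,\,S_k<\beta't_c\}\,\P\{T_{k+1}>cq_k\mid\cF_k\}\Big]=K_0\,O(h^{-1}),
\]
which is the assertion. The hard part is the joint calibration of $h$: it must dominate $K_0\sim\log n/\log(np)$ while being dominated by $np$, and this is feasible exactly under $(np)^{np}\gg n$. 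A secondary technical point is that the per-step estimate of Lemma \ref{LF1} is only conditional, so the union bound must be taken along $\cF_k$ while propagating the inductively maintained bound $S_k<\beta't_c$.
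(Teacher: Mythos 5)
Your proof is correct and follows essentially the same route as the paper: iterate Lemma \ref{LF1}(ii) to get geometric contraction of $pT_k\sum_{l\le k}T_l$ at rate $O(1/(np))$, take $K_0\asymp \log n/\log(np)$, and calibrate $h$ strictly between $K_0$ and $np$, which is feasible exactly under the paper's reading of $(np)^{np}\gg n$ (i.e.\ $\log n/\log(np)=o(np)$). Your minor variations---an explicit choice $h=(np\log n/\log(np))^{1/2}$ where the paper only proves existence via its Claim, a direct joint induction on $(S_k,q_k)$ with the buffer $\beta'\in(\beta,1)$ in place of invoking Lemma \ref{LF2}, and a filtration union bound instead of the paper's conditional telescoping on $\{\tau>K_0\}$---are cosmetic and, if anything, handle the constraint $\sum_{l\le k}T_l<\beta' t_c$ needed for Lemma \ref{LF1} slightly more carefully than the original.
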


\noindent
{\bf Proof.}
Recall that by Lemma \ref{LF1} for all $ k \geq 1 $  given $T_k \sum_{l=1}^kT_l>h$ we have
\begin{equation}\label{M2}
 T_{k+1} \leq \frac{10}{1-\beta} p T_k \sum_{l=1}^{k}T_l
\end{equation}
with probability at least $1-O( (np)^{-1}+h^{-1}  )$. Hence, for any $K_0$  we have
\begin{align}
 {\P} 
\left\{T_{k+1} \leq \frac{10}{1-\beta} p T_k \sum_{l=1}^{k}T_l, 1\leq k \leq K_0
 \mid  \tau > K_0 \right\}
& \geq 1-K_0 O( (np)^{-1}+h^{-1}  )
\nonumber
\\
&
=1-K_0 O( h^{-1}  ),
\label{M3}
\end{align}
where the last equality is due to the assumption that $h=o(np)$.

Let us now choose $K_0$ as follows. 
Assume that the relation  (\ref{M2})
holds for all $1 \leq
k\leq K_0$. By our assumptions we also have here
\[\frac{10}{1-\beta} p T_1=O\left(\frac{1}{np}\right)=o(1).\]
Hence, by  Lemma \ref{LF2}
with $c=\frac{10}{1-\beta} p $ we have (conditionally on (\ref{M2})  for all $1 \leq
l\leq k$)
\begin{equation}\label{M9}
T_{k} \leq \alpha^{k-1}T_1
\end{equation}
for  some $\alpha$ which satisfies condition 
$(1-\alpha)\alpha> cT_1$
(see (\ref{F22})). 
Notice, that here we can choose 
\[
\alpha < 2cT_1=2\frac{10}{1-\beta}\frac{1}{pn} =: \frac{c_1}{pn} ,
\]
which together with  (\ref{M9}) yields
\begin{equation}\label{M10}
T_{k} \leq \left( \frac{c_1}{pn} \right)^{k-1}T_1 = 
\beta \left( \frac{c_1}{pn} \right)^{k-1}  \frac{1}{p^2n},
\end{equation}
as well as 
\begin{equation}\label{M17}
\sum_{l\leq k}T_l<\frac{T_1}{1-(c_1/pn)}.
\end{equation}
This implies 
\begin{equation}\label{M18}
p T_{k} \sum_{l=1}^{k}T_l
 \leq p \beta 
\left( \frac{c_1}{pn} \right)^{k-1}  
\frac{1}{p^2n}  \frac{T_1}{1-(c_1/pn)}<\beta^2 
\left( \frac{c_1}{pn} \right)^{k}  
\frac{1}{p^2n}.
\end{equation}
Setting  now
\begin{equation}\label{M11}
K_0:= \min \left\{k: \beta^2 \left( \frac{c_1}{pn} \right)^{k+1}  \frac{1}{p^2n}
<h\right\},
\end{equation}
we have by (\ref{M18})
\begin{equation}\label{M11*}
p T_{K_0+1} \sum_{l=1}^{K_0+1}T_l
 <h.
\end{equation}

{\it Claim.} One can choose an unbounded function $h$ so that $h=o(np)
$ and 
\begin{equation}\label{Ap1}
K_0=o(h)
\end{equation}

if and only if $n=o\left( (np)^{np}\right)$.

\begin{proof}[Proof of the Claim.] Assume that some function $h\rightarrow \infty$
satisfies (\ref{Ap1}), which by  definition 
(\ref{M11}) is equivalent to 
\[(pn)^{o(h)}=\frac{n}{h}. \]
Under the assumption $h=o(np)$ and $pn, h\rightarrow \infty$, this holds if and only if  
\begin{equation}\label{Ap20}
 o(h)\log(pn)=\log n - \log h.
\end{equation}
Again under the condition that $h=o(np)$, relation (\ref{Ap20}) is equivalent to 
\begin{equation}\label{Ap21}
 o(h)=\frac{\log n}{\log(pn)} .
\end{equation}
Finally, the last equality is satisfied for some $h=o(np)$ if and only if 
\[\frac{\log n}{\log(pn)} =o(np).\]
The assertion of the claim follows.
\end{proof}

Observe that for $K_0$ which is chosen according to (\ref{M11}) and, hence, (\ref{M11*}), we have
\begin{align*}  \P &
\left\{T_{k+1} \leq \frac{10}{1-\beta} p T_k \sum_{l=1}^{k}T_l, 1\leq k \leq K_0
 \mid  \tau > K_0 \right\}
\\
& \qquad  \qquad =\P 
\left\{\left(
T_{k+1} \leq \frac{10}{1-\beta} p T_k \sum_{l=1}^{k}T_l, 1\leq k \leq K_0\right) \cap  \left(  \tau=K_0+1\right)
 \mid  \tau > K_0 \right\}
\\
&  \qquad  \qquad \leq \P 
\left\{  \tau=K_0+1
 \mid  \tau > K_0 \right\}=\P 
\left\{  \tau=K_0+1\right\}/ \, \P 
\left\{ 
   \tau > K_0 \right\}.
\end{align*}
Combining this with (\ref{M3}) we get
\[  
{\P} 
\left\{  \tau>K_0\right\}-{\P} 
\left\{  \tau>K_0+1\right\}
=
 {\P} 
\left\{  \tau=K_0+1\right\}
\geq \left(1-K_0 O( h^{-1}  ) \right){\P} 
\left\{ 
   \tau > K_0 \right\},
\]
which yields
\begin{align}\label{M19}
\P 
 \left\{  \tau \leq K_0+1\right\}=1-\P 
\left\{  \tau>K_0+1\right\}
& \geq 1-K_0 O( h^{-1}  ) \P 
\left\{ 
   \tau > K_0 \right\}
\\
& \geq 1-K_0 O( h^{-1}  ) .
\nonumber
\end{align}
This together with the assertion of the claim completes the proof of the Proposition.
\hfill$\Box$

We shall finish now the proof of the assertion \ref{T1i} of Theorem \ref{T1}.
Using the representation
$|{\cal A}^*|=\sum_{k=1}^K T_k$
consider  for an arbitrarily fixed $0<\varepsilon<1-\beta$
\begin{align}\label{M12}
\P \Bigl\{|{\cal A}^*|<
& (\beta + \varepsilon)t_c \mid T_1= \beta t_c\Bigr\}
\\
& ={\P}\left\{\sum_{l\leq K}T_l<
(\beta + \varepsilon)t_c \mid T_1= \beta t_c\right\} 
\nonumber
\\
 & \geq 
{\P}\left\{\left( \sum_{l\leq K}T_l<
(\beta + \varepsilon)t_c\right) \cap \left(\sum_{l\leq \tau}T_l<
\left(\beta + \frac{\varepsilon}{2}\right)t_c \right) \mid T_1= \beta t_c\right\}
\nonumber
\\ & 
\geq 
{\P}\left\{\left( \sum_{l\leq K}T_l<
(\beta + \varepsilon)t_c\right) \cap \left( K=\tau+1\right)
\mid \sum_{l\leq \tau}T_l<
\left(\beta + \frac{\varepsilon}{2}\right)t_c,  T_1= \beta t_c\right\}
\nonumber
\\ & \qquad \qquad  {\P}\left\{\sum_{l\leq \tau}T_l<
\left(\beta + \frac{\varepsilon}{2}\right)t_c \mid  T_1= \beta t_c\right\}.
\nonumber
\end{align}

By Lemma 
\ref{LF1} if $pT_{\tau}\sum_{l\leq \tau}T_l=O(1)$ we have 
$T_{\tau+1}=O_{L^1}(1)$, while if $pT_{\tau}\sum_{l\leq \tau}T_l \rightarrow \infty$ and 
$\sum_{l\leq \tau}T_l<
\left(\beta + \frac{\varepsilon}{2}\right)t_c$ then with probability $1-o(1)$ 
 we have 
\begin{equation}\label{M15}
T_{\tau+1}\leq \frac{10}{1-\beta}  p T_{\tau}\sum_{l\leq \tau}T_l \leq 
\frac{10}{1-\beta}  p\left(\beta + \frac{\varepsilon}{2}\right)^2t_c^2 =O\left( \frac{t_c}{pn}\right).
\end{equation}
 Hence, in either case for $K=\tau+1$ we have \whp
\[
 \sum_{l\leq K}T_l<
\left(\beta + \frac{\varepsilon}{2}\right)t_c+o(t_c)<(\beta +\varepsilon) t_c,\]
for any $\varepsilon >0$. 
This yields

\begin{align}\label{M13}
{\P}\Biggl\{\left( \sum_{l\leq K}T_l<
(\beta + \varepsilon)t_c\right) \cap & \left( K=\tau+1\right)
\mid \sum_{l\leq \tau}T_l<
\left(\beta + \frac{\varepsilon}{2}\right)t_c,  T_1= \beta t_c\Biggr\}
\\ & = {\P}\left\{ K=\tau+1
\mid \sum_{l\leq \tau}T_l<
\left(\beta + \frac{\varepsilon}{2}\right)t_c,  T_1= \beta t_c\right\}-O( h^{-1}  )
\nonumber
\\ & ={\P}\left\{ |D_{\tau+1}|=0
\mid \sum_{l\leq \tau}T_l<
\left(\beta + \frac{\varepsilon}{2}\right)t_c,  T_1= \beta t_c\right\}-o(1)
.
\nonumber
\end{align}

By (\ref{M15})  we have $T_{\tau+1}=O(h)=o(np)$ with probability at least $1-O(  h^{-1}  )$. Then under condition
$\sum_{l\leq \tau}T_l=O(t_c)$ we have
\[pT_{\tau+1}\sum_{l\leq \tau+1}T_l=pO(h(t_c+h)) =o(1) \ \mbox{ and } \  T_{\tau+1}\sum_{l\leq \tau+1}T_l/n=o(1),\]
which by Lemma \ref{CJ2} yields 
\[{\P}\left\{ |D_{\tau+1}|=0
\mid \sum_{l\leq \tau}T_l<
\left(\beta + \frac{\varepsilon}{2}\right)t_c,  T_1= \beta t_c\right\}=1+o(1).
\]
Combining the last bound with (\ref{M13}) and substituting the result into 
(\ref{M12}) we get 
\begin{equation}\label{M16}
{\P}\left\{\sum_{l\leq K}T_l<
(\beta + \varepsilon)t_c \mid T_1= \beta t_c\right\} \geq 
(1-o(1))
{\P}\left\{\sum_{l\leq \tau}T_l<
\left(\beta + \frac{\varepsilon}{2}\right)t_c \mid  T_1= \beta t_c\right\}.
\end{equation}
Consider now
\begin{align}
{\P}& \left\{\sum_{l\leq \tau}T_l<
\left(\beta + \frac{\varepsilon}{2}\right)t_c \mid  T_1= \beta t_c\right\}
\nonumber
\\ & \geq 
{\P}  \left\{
\left(
\sum_{l\leq \tau}T_l<
\left(\beta + \frac{\varepsilon}{2}\right)t_c \right)\cap
\left( T_{k+1} \leq \frac{10}{1-\beta} p T_k 
\sum_{l=1}^{k}T_l, 1\leq k \leq \tau
\right)\mid  T_1= \beta t_c, \tau\right\}
\nonumber
\\
& =
{\P}\left\{T_{k+1} \leq \frac{10}{1-\beta} p T_k 
\sum_{l=1}^{k}T_l, 1\leq k \leq \tau
\mid  T_1= \beta t_c, \tau\right\},
\label{Ll}
\end{align}
where the last equality is due to (\ref{M17}) and the fact that 
\[
\frac{T_1}{1-(c_1/pn)} = \frac{\beta}{1-(c_1/pn)} t_c < \left(\beta + \frac{\varepsilon}{2}\right)t_c
\]
for any fixed $\varepsilon>0$.
Now using the same argument as in  (\ref{M3}) we derive from (\ref{Ll}) 
\[{\P}\left\{\sum_{l\leq \tau}T_l<
\left(\beta + \frac{\varepsilon}{2}\right)t_c \mid  T_1= \beta t_c\right\}
\geq {\E}\left( \left(1-\tau O( h^{-1})\right)
\ind \{\tau<K_0\}  \right)  .
\]
Making use of Proposition \ref{Pfi}  we derive from here
\[
{\P}\left\{\sum_{l\leq \tau}T_l<
\left(\beta + \frac{\varepsilon}{2}\right)t_c \mid  T_1= \beta t_c\right\}
\geq (1-K_0 O( h^{-1}  ))^2.
\]
Substituting the last bound into (\ref{M16}) we finally get
\[
{\P}\left\{\sum_{l\leq K}T_l<
(\beta + \varepsilon)t_c \mid T_1= \beta t_c\right\} \geq 
(1-o(1)) (1-K_0 O( h^{-1}  ))^2=1-o(1),
\]
where the last equation is due to the property (\ref{Ap1}).
This proves statement \ref{T1i} of Theorem \ref{T1}.

\subsubsection{Supercritical case}

Let us turn to the statement \ref{T1ii} of Theorem \ref{T1}.
First, we note that by 
 the corresponding part  $(ii)$ of Theorem 3.1 \cite{JLTV} cited above
 the bootstrap percolation process on $G_{n,p}^1$
accumulates at least 
$n-O_p(pn^2e^{-pn})$ vertices. 
Given that $K= n-O(pn^2e^{-pn})$ are active, the 
 number of remaining vertices which do not have any connection to these 
$K$ active vertices has Bin$(n-K, (1-p)^K)$ distribution.
The expectation of this number is
bounded from above by 
\begin{equation}\label{M20}
(n-K)(1-p)^K \leq  O(pn^2e^{-pn})e^{-pn+O((pn)^2e^{-pn})}
=  O(e^{-2pn+ \log n + \log (pn)}) =o(1),
\end{equation}
where we used the assumption that $2pn \gg \log n+ \log (pn).$
Hence, \whp \  each of the remaining vertex has at least one 
link to the active set. This allows the percolation propagate through 
 the short connections: each vertex on the boundary of active set becomes active if it also has a long connection.
This completes the proof of Theorem \ref{T1}.\hfill$\Box$

\bigskip

{\bf Acknowledgement.} The authors thank the referees for 
 the very detailed reading and remarks which helped to improve the presentation and the proofs.


\end{document}